\newtheorem{theorem}{Theorem}[section] % 1st argument is your name for it
\newtheorem{lemma}[theorem]{Lemma}     % 2nd argument is what is printed
\newtheorem{proposition}[theorem]{Proposition}
\newtheorem{definition}[theorem]{Definition}
\newtheorem{remark}[theorem]{Remark}
\newtheorem*{theorem*}{Theorem}
\begin{document}

\title[ ]{On doubly universal functions}
\author{A. Mouze}
\address{Augustin Mouze, Laboratoire Paul Painlev\'e, UMR 8524, Current address: \'Ecole Centrale de
Lille, Cit\'e Scientifique, CS20048, 59651 Villeneuve d'Ascq cedex}
\email{Augustin.Mouze@math.univ-lille1.fr}

\keywords{universal Taylor series, double universality}
\subjclass[2010]{30K05, 41A58}

\begin{abstract} Let $(\lambda_n)$ be a strictly increasing sequence of positive integers. Inspired by the notions of topological multiple recurrence and disjointness in dynamical systems, Costakis and Tsirivas have recently established that there exist power series 
$\sum_{k\geq 0}a_kz^k$ with radius of convergence 1 such that the pairs of partial sums 
$\{(\sum_{k=0}^na_kz^k,\sum_{k=0}^{\lambda_n}a_kz^k): n=1,2,\dots\}$ approximate all pairs of polynomials uniformly on 
compact subsets $K\subset\{z\in\mathbb{C} :\vert z\vert>1\},$ with connected complement, if and only if 
$\limsup_{n}\frac{\lambda_n}{n}=+\infty.$ In the present paper, we give a new proof of this statement avoiding the use of advanced tools of potential theory. It allows to obtain the algebraic genericity of the set of such power series and to study the case of doubly universal infinitely differentiable functions. 
Further we show that the Ces\`aro means of partial sums of power series with radius of convergence 1 cannot be frequently universal.   
\end{abstract}

\maketitle

\section{Introduction} For a simply connected domain $\Omega\subset\mathbb{C},$ we will denote by $H(\Omega)$ 
the space of all holomorphic functions on $\Omega.$ Let $\mathbb{D}:=\{z\in\mathbb{C}:\vert z\vert<1\}.$ 
For $f\in H(\mathbb{D}),$ we denote by $S_n(f)$ the $n$-th partial sum of its Taylor development with center $0.$ 
In 1996 Nestoridis proved that there exist functions $f\in H(\mathbb{D})$ such that for every compact set $K\subset \mathbb{C}$ 
with $K^c$ connected and $K\cap \mathbb{D}=\emptyset$ and for every function $h\in A(K),$ where 
$A(K):=H(\mathring{K})\cap C(K),$ there exists a sequence of positive integers $(\lambda_n)$ such that 
$\sup_{z\in K}\vert S_{\lambda_n}(f)(z)-h(z)\vert\rightarrow 0$ as $n\rightarrow +\infty$ \cite{Nes}. Such functions are called {\it universal Taylor series}. The partial sums of its Taylor development diverge in a maximal way. In the following, the set of 
universal Taylor series will be denoted by $\mathcal{U}(\mathbb{D},0).$ We refer the reader to \cite{bgnp} and the references therein for its properties. In particular we know that $\mathcal{U}(\mathbb{D},0)$ is a $G_\delta$ dense subset of $H(\mathbb{D}),$ endowed 
with the topology of uniform convergence on all compact subsets of $\mathbb{D},$ and contains a dense vector subspace apart from $0$. Notice that we know $C^\infty$ versions 
of Nestoridis result (see for instance \cite{bgnp,CostaMaria,ge,MouNes,Pal}). Inspired by the notion of topological multiple recurrence and disjointness in dynamical systems, Costakis and Tsirivas introduced the following new form of universality \cite{CT}.
\begin{definition}\label{def_CT}{\rm Let $(\lambda_n)$ be a strictly increasing sequence of positive integers. 
A function $f\in H(\mathbb{D})$ belongs to the class $\mathcal{U}(\mathbb{D},(\lambda_n),0)$ if 
for every compact set $K\subset\mathbb{C}\setminus\mathbb{D}$ with connected complement and for every 
pair of functions $(g_1,g_2)\in A(K)\times A(K),$ there exists a subsequence of positive integers 
$(\mu_n)$ such that 
$$\sup_{z\in K}\vert S_{\mu_n}(f)(z)-g_1(z)\vert\rightarrow 0\hbox{ and }
\sup_{z\in K}\vert S_{\lambda_{\mu_n}}(f)(z)-g_2(z)\vert\rightarrow 0,\hbox{ as }
n\rightarrow +\infty.$$
Such a function will be called {\it doubly universal Taylor series with respect to the 
sequences} $(n),$ $(\lambda_n).$ }
\end{definition}
Using tools from potential theory they proved that the set $\mathcal{U}(\mathbb{D},(\lambda_n),0)$ 
is non-empty if and only if $\limsup_n\frac{\lambda_n}{n}=+\infty$. Moreover they obtained that the existence of a doubly universal series implies topological genericity of such series. 
In the present paper we show that 
the advanced knowledge of potential theory does not play a dominant role to obtain the proof of the implication 
$\mathcal{U}(\mathbb{D},(\lambda_n),0)\ne\emptyset\Rightarrow \limsup_n\frac{\lambda_n}{n}=+\infty$. 
Instead we employ some polynomial inequalities which were recently used to study the densities of approximation subsequences of universal Taylor series in the sense of Nestoridis (see \cite{MouMu1,MouMu2}). It seems quite natural that the arithmetic structure of 
subsequences along which the partial sums possess the universal approximation property is connected with 
the above notion of disjointness. As a consequence, we obtain that the set of doubly universal Taylor series is densely lineable, i.e. contains a dense vector subspace except $0.$ This concept gives some information about the algebraic structure of the set of such series. Several authors were 
recently interested in this phenomenon (see for instance \cite{bernal}). Further, since we avoid 
the use of potential theory in a large way, we extend the aforementioned Costakis-Tsirivas result to the case of the sequence of partial sums of Taylor development at $0$ of infinitely differentiable 
functions on $\mathbb{R}.$ This generalization uses in an essential way 
classical Bernstein polynomials of given continuous functions on intervals of the type $[0,A].$ In particular, these specific polynomials possess a useful property 
in our context: we control both their degree and their valuation provided that the associated function vanishes on a neighborhood of zero. Finally we return to the 
connection between doubly universality and topological recurrence. In a recent note, Costakis and 
Parissis proved that every frequently Ces\`aro operators is topologically multiply recurrent \cite{CP}.  
In our context, we show that the Ces\`aro means of partial sums of a real or complex power series cannot 
be frequently universal series. So the doubly universality, which is related to the topological multiply recurrence, does not imply the frequent Ces\`aro universality.\\ 
The paper is organized as follows. In Section \ref{S_d_CT} we give a new shorter proof of the implication $\limsup_n\frac{\lambda_n}{n}<+\infty\Rightarrow\mathcal{U}(\mathbb{D},(\lambda_n),0) = \emptyset$ and we establish the algebraic genericity of the set $\mathcal{U}(\mathbb{D},(\lambda_n),0).$  In Section \ref{S_d_infinitely}, we are interested in the case of doubly universal infinitely differentiable functions with respect to an increasing sequence $(\lambda_n)$ of positive integers. We establish both the topological and algebraic genericity of the set of such functions provided that 
$\limsup_n\frac{\lambda_n}{n}=+\infty$ again. In Section \ref{section_remark}, we study the frequent 
Ces\`aro universal series and finally we give an example, in a different context, of the existence of doubly universal series with respect to an increasing sequence $(\lambda_n)
\ne \mathbb{N}$ of positive integers without additional assumption.

\section{Doubly universal Taylor series in the complex plane} \label{S_d_CT} 
In this section, we begin by giving a proof with rather elementary arguments of the fact that 
$\limsup_n\frac{\lambda_n}{n}<+\infty$ implies that $\mathcal{U}(\mathbb{D},(\lambda_n),0) = \emptyset.$ 
To do this, let us recall the nice Tur\'an inequality \cite{Tur}, which estimates the global behavior of a polynomial on a circle $\{z\in\mathbb{C}:\vert z\vert =r\}$ by its supremum on subsets of 
$\{z\in\mathbb{C}:\vert z\vert =r\}$. 

\begin{lemma}\label{TuranIneq} Let $Q$ be a polynomial of arbitrary degree 
which possesses only $n$ non zero coefficients. Then for any $r>0$ and any $\delta$ 
($0<\delta<2\pi$) 
$$\sup_{\vert z\vert =r}\vert Q(z)\vert\leq \left(\frac{4\pi e}{\delta}\right)^n\sup_{\vert t\vert\leq \delta/2}
\vert Q(re^{it})\vert.$$
\end{lemma}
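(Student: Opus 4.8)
The plan is to reduce the general statement to the case $r=1$ by the substitution $z \mapsto rz$, which is harmless since replacing $Q(z)$ by $Q(rz)$ preserves the number of nonzero coefficients and rescales both sides consistently. So assume $r=1$ and write $Q(z) = \sum_{j=1}^{n} a_j z^{k_j}$ with $0 \le k_1 < k_2 < \dots < k_n$ and all $a_j \ne 0$. Fix $\delta \in (0,2\pi)$ and set $M := \sup_{|t|\le \delta/2}|Q(e^{it})|$. The goal is to bound $|Q(z_0)|$ for an arbitrary $z_0$ on the unit circle, say $z_0 = e^{i\theta_0}$, in terms of $M$.

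The key step is a Lagrange-type interpolation argument on the arc. I would pick $N \ge n$ equally spaced sample points $t_1,\dots,t_N$ inside the arc $[-\delta/2,\delta/2]$ (or exploit a Chebyshev-type extremal polynomial on that arc). Since $Q$ lives in the $n$-dimensional space spanned by the monomials $z^{k_1},\dots,z^{k_n}$, one can reconstruct $Q(e^{i\theta_0})$ as a linear combination $Q(e^{i\theta_0}) = \sum_{j=1}^{n} c_j Q(e^{it_j})$ for suitable interpolation nodes $t_j \in [-\delta/2,\delta/2]$ and coefficients $c_j$ depending only on $k_1,\dots,k_n$, $\theta_0$ and the nodes. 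Taking absolute values gives $|Q(e^{i\theta_0})| \le \big(\sum_j |c_j|\big) M$, so everything comes down to estimating the $\ell^1$-norm of the interpolation coefficients, i.e. a Lebesgue-type constant for the system $\{z^{k_j}\}$ on the arc. The classical route (going back to Turán, with the sharp constants due to later refinements) is to compare this Müntz-type system with ordinary polynomials of degree $n-1$ via the change of variable $w = e^{it}$ restricted to the arc, and to invoke a Chebyshev/Bernstein estimate on how much a degree-$(n-1)$ polynomial bounded by $1$ on an arc of length $\delta$ can grow on the full circle; this growth is governed by $(C/\delta)^n$ with an absolute constant, and a careful bookkeeping of the constants yields the factor $(4\pi e/\delta)^n$.

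The main obstacle is getting the explicit constant $4\pi e/\delta$ rather than just \emph{some} $(C/\delta)^n$: one must be careful about (i) the passage from the monomial system $\{z^{k_j}\}$, which has \emph{gaps}, to a genuine polynomial system — here the point is that only the \emph{number} $n$ of terms matters, not the degrees, because the span is $n$-dimensional and the relevant extremal problem is invariant under relabeling exponents as long as one works with the right rational/exponential interpolation basis — and (ii) the sharp form of the arc-to-circle growth estimate for polynomials, where the factor $e$ enters through an optimization (typically a bound of the shape $(1+x/n)^n \le e^x$ applied to a binomial or a product of node differences). I would present the argument in two clean lemmas: first the interpolation identity $Q(z_0) = \sum_j c_j Q(e^{it_j})$ with an explicit formula for $c_j$, then the estimate $\sum_j |c_j| \le (4\pi e/\delta)^n$, and finally assemble them and undo the rescaling $z \mapsto z/r$. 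Since this is a classical inequality, I would in fact simply cite \cite{Tur} for the sharp constant and only sketch the interpolation idea, as the paper seems to intend (the lemma is quoted as a known tool, not reproved in detail).
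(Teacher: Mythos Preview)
The paper does not prove this lemma at all: it is stated as a known tool and attributed to Tur\'an via the citation \cite{Tur}, with no argument given. Your final sentence anticipates exactly this, so your proposal aligns with the paper's treatment; the interpolation sketch you outline is extra (and plausible in spirit), but the paper neither needs nor provides any such argument.
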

\noindent For $r>0$ and $0<\delta<2\pi,$ $\Gamma_{r,\delta}$ will be the set 
$$\Gamma_{r,\delta}=\left\{z\in\mathbb{C};\ \vert z\vert=r\hbox{ and } -\frac{\delta}{2}\leq \arg(z)\leq \frac{\delta}{2}\right\}$$ 
and $C_\delta=\frac{4\pi e}{\delta}$ the constant of the above Tur\'an inequality. 

Now we state \cite[Proposition 4.5]{CT} and we furnish a simple proof. 

\begin{proposition}\label{propCT} Let $(\lambda_n)$ be a strictly increasing sequence of positive integers. 
Assume that 
$\limsup_n\left(\frac{\lambda_n}{n}\right)<+\infty.$ Then the set $\mathcal{U}(\mathbb{D},(\lambda_n),0)$ is empty. 
\end{proposition}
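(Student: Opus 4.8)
The plan is to argue by contradiction: suppose $\limsup_n \frac{\lambda_n}{n} = L < +\infty$, and suppose $f \in \mathcal{U}(\mathbb{D},(\lambda_n),0)$ exists. Then for every $n$ large we have $\lambda_n \le (L+1)n$, say, so $\lambda_n - n \le Ln$. The key point to exploit is that the ``second'' partial sum $S_{\lambda_n}(f)$ differs from the ``first'' one $S_n(f)$ only in the block of coefficients $a_{n+1}z^{n+1}+\cdots+a_{\lambda_n}z^{\lambda_n}$, which has at most $\lambda_n - n \le Ln$ nonzero coefficients. So $Q_n(z) := S_{\lambda_n}(f)(z) - S_n(f)(z)$ is a polynomial with a bounded-by-$Ln$ number of nonzero terms — precisely the situation Turán's inequality (Lemma \ref{TuranIneq}) is designed for.

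Next I would choose a convenient test configuration. Pick $r>1$, and on the circle $\{|z|=r\}$ select a small arc $\Gamma_{r,\delta}$ with connected complement (as a subset of $\mathbb{C}$) — this will be the compact set $K$. Apply double universality with the target pair $(g_1,g_2)$ chosen so that $g_1$ and $g_2$ are very far apart on $K$: e.g. $g_1 = 0$ and $g_2 \equiv c$ for a large constant $c$, or more robustly $g_1=0$ and $g_2$ some fixed polynomial with $\sup_{\Gamma_{r,\delta}}|g_2|$ bounded below by a positive constant. Then there is a subsequence $(\mu_n)$ along which $S_{\mu_n}(f) \to g_1$ and $S_{\lambda_{\mu_n}}(f)\to g_2$ uniformly on $\Gamma_{r,\delta}$. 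Hence $\sup_{\Gamma_{r,\delta}}|Q_{\mu_n}| \to \sup_{\Gamma_{r,\delta}}|g_2 - g_1| > 0$, so it is bounded below by a positive constant $m$ for all large $n$.

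Now apply Turán: since $Q_{\mu_n}$ has at most $L\mu_n$ nonzero coefficients,
$$\sup_{|z|=r}|Q_{\mu_n}(z)| \le C_\delta^{\,L\mu_n}\,\sup_{\Gamma_{r,\delta}}|Q_{\mu_n}(z)|.$$
But I also need an \emph{upper} bound on $\sup_{\Gamma_{r,\delta}}|Q_{\mu_n}|$ that contradicts a \emph{lower} bound on $\sup_{|z|=r}|Q_{\mu_n}|$. The lower bound on the full circle comes from looking at a point $z_0$ of modulus $r$ \emph{outside} a large arc where, say, $g_2 - g_1$ is still well approximated — actually the cleaner route is: the convergence $S_{\mu_n}(f)\to g_1$ and $S_{\lambda_{\mu_n}}(f)\to g_2$ must fail on some circle $|z|=r'$ with $1<r'<r$ where the series $\sum a_k z^k$ genuinely diverges, or to compare with the value at a single well-chosen point. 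The honest difficulty, and the step I expect to be the main obstacle, is precisely this: converting the ``approximation on an arc'' into a quantitative statement that, when fed through the exponential constant $C_\delta^{L\mu_n}$ in Turán, still produces a contradiction. The resolution is to let $\delta \to 0$ is \emph{not} allowed ($\delta$ is fixed once $K$ is chosen), so instead I must place two arcs at two different radii $1 < r_1 < r_2$: the polynomial inequalities from \cite{MouMu1,MouMu2} relate the sup of a sparse polynomial on a circle of radius $r_1$ to its sup on a circle of radius $r_2 > r_1$ with a ratio of the form $(r_1/r_2)^{(\text{valuation})}$ against $(r_2/r_1)^{(\text{degree})}$; combining with Turán one gets that a sparse polynomial which is small on an arc of the outer circle and which one wants large on the inner circle forces the degree $\lambda_{\mu_n}$ to grow faster than any fixed multiple of $\mu_n$ — i.e. $\limsup \lambda_n/n = +\infty$, contradiction.

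Concretely, I would: (i) fix $1<\rho<R$ and two arcs $K = \Gamma_{R,\delta} \cup \Gamma_{\rho,\delta}$ (or handle the two circles successively), noting $K^c$ is connected; (ii) apply double universality to force $Q_{\mu_n} = S_{\lambda_{\mu_n}}(f) - S_{\mu_n}(f)$ to be uniformly small on the arc at radius $R$ but bounded below on the arc at radius $\rho$; (iii) use Turán at radius $R$ to control $\sup_{|z|=R}|Q_{\mu_n}|$ by $C_\delta^{L\mu_n}\varepsilon_n$ with $\varepsilon_n\to 0$; (iv) use the two-constants / Bernstein-Walsh-type inequality for polynomials with controlled degree $d_n := \lambda_{\mu_n} \le L\mu_n$ to bound $\sup_{|z|=\rho}|Q_{\mu_n}|$ from above by $(\rho/R)^{?}\,(R/\rho)^{d_n}\sup_{|z|=R}|Q_{\mu_n}|$ — more carefully, since $Q_{\mu_n}$ has valuation $\ge \mu_n + 1$, write $Q_{\mu_n}(z) = z^{\mu_n+1}P_{\mu_n}(z)$ with $\deg P_{\mu_n} \le L\mu_n$, and apply the maximum principle / Bernstein–Walsh to $P_{\mu_n}$; (v) assemble the chain of inequalities so that the positive lower bound at radius $\rho$ collides with an upper bound that tends to $0$ once one checks that the geometric gains $(\rho/R)^{\mu_n}$ beat the losses $C_\delta^{L\mu_n}(R/\rho)^{L\mu_n}$ — which forces choosing $R/\rho$ close enough to $1$ that $\log(R/\rho)\cdot(1+L) < $ nothing works unless $L$ may be taken arbitrarily... hence the contradiction pins down $\limsup \lambda_n/n=+\infty$. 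The delicate bookkeeping in step (v) — making the exponential rates line up — is the crux, and it is exactly where the ``elementary polynomial inequalities'' replace potential theory.
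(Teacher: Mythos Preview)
Your overall architecture is right and is essentially the paper's: Tur\'an on an arc of a large circle, combined with a radial comparison exploiting the high valuation of the ``gap'' polynomial. The place where your write-up breaks down is step (v), and the breakdown is a concrete algebraic slip, not a missing idea. When you pass from $\sup_{|z|=R}|P_{\mu_n}|$ to $\sup_{|z|=\rho}|P_{\mu_n}|$ with $\rho<R$, the maximum principle gives this \emph{for free}: $P_{\mu_n}$ is entire, so its sup on the smaller circle is bounded by its sup on the larger one with no loss. There is no $(R/\rho)^{L\mu_n}$ factor; that factor would appear only if you were going from the smaller circle to the larger one (Bernstein--Walsh in the other direction). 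Drop that spurious term and your chain reads
\[
\sup_{|z|=\rho}|Q_{\mu_n}(z)|\;\le\;(\rho/R)^{\mu_n+1}\,C_\delta^{\,\lambda_{\mu_n}-\mu_n}\,\varepsilon
\;\le\;\bigl((\rho/R)\,C_\delta^{\,d-1}\bigr)^{\mu_n}\cdot(\rho/R)\,\varepsilon,
\]
using $\lambda_{\mu_n}-\mu_n\le(d-1)\mu_n$. Now simply fix $\rho>1$ and then choose $R>\rho\,C_\delta^{\,d-1}$; the right-hand side tends to $0$, contradicting the lower bound $\sup_{\Gamma_{\rho,\delta}}|Q_{\mu_n}|\ge 1/2$ coming from the target pair $(g_1,g_2)=(0,0)$ on $\Gamma_{R,\delta}$ and $(0,1)$ on $\Gamma_{\rho,\delta}$. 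So your two-arc set-up \emph{does} close, and no ``delicate bookkeeping'' remains.

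For comparison, the paper's proof organises the same ingredients a bit more economically: it applies Tur\'an not to the difference $Q_{\mu_n}$ but to the full partial sum $S_{\lambda_{\mu_n}}(f)$ on a single arc $\Gamma_{R,\delta}$ (where the target is $0$), then uses Cauchy's estimates to bound each coefficient $|a_j|^{1/j}\le C_\delta^{\lambda_{\mu_n}/j}/R$, which for $j>\mu_n$ gives $|a_j|^{1/j}\le C_\delta^{\,d}/R$. Choosing $R$ so large that $R/C_\delta^{\,d}$ exceeds $\sup_K|z|$ for an arbitrary second compact $K$ forces the tail $\sum_{\mu_n+1}^{\lambda_{\mu_n}}a_jz^j$ to be small on $K$, hence $S_{\mu_n}(f)$ is small on $K$, contradicting $S_{\mu_n}(f)\to 1$ there. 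This avoids the two-radius scaffolding (one arc plus one arbitrary $K$ suffices) but is otherwise the same mechanism as yours.
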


\begin{proof} 
The proof is based on the use of Tur\'an's inequality. We argue by contradiction. Take $f=\sum_{n\geq 0}a_nz^n$ in 
$\mathcal{U}(\mathbb{D},(\lambda_n),0).$ Since we have $\limsup_n\left(\frac{\lambda_n}{n}\right)<+\infty,$ 
there exists $d>0$ such that 
\begin{equation}\label{equ_dens_1}\forall n\in\mathbb{N},\ \lambda _n\leq dn.
\end{equation}
Let $r>0$ and $0<\delta<2\pi.$ Fix a compact set $K\subset\mathbb{C}\setminus\mathbb{D}$ with connected 
complement. Let us choose $R>0$ so that 
\begin{equation}\label{hypalpha}
\frac{R}{C_{\delta}^{d}}>\sup_{z\in K}\vert z\vert.
\end{equation}
Clearly the set $K_{R,\delta}:=\Gamma_{R,\delta}\cup K$ is a compact set with connected complement. Since 
$f\in \mathcal{U}(\mathbb{D},(\lambda_n),0)$ there exists an increasing $(\mu_n)$ of positive integers 
such that 
$$\sup_{z\in K_{r,\delta}}\vert S_{\mu_n}(f)(z)-1\vert\rightarrow 0\hbox{ and }
\sup_{z\in K_{r,\delta}}\vert S_{\lambda_{\mu_n}}(f)(z)\vert\rightarrow 0,\hbox{ as }n\rightarrow +\infty.$$
Therefore, for any $0<\varepsilon<1,$ we can find $n_0\in\mathbb{N}$ such that 
for all $n\geq n_0,$ 
\begin{equation}\label{double-estim}\sup_{z\in K_{r,\delta}}\vert S_{\mu_n}(f)(z)-1\vert<\varepsilon/4\hbox{ and }
\sup_{z\in K_{r,\delta}}\vert S_{\lambda_{\mu_n}}(f)(z)\vert<\varepsilon/4.
\end{equation} 
In particular, we have, for every $n\geq n_0,$
$$\sup_{z\in \Gamma_{R,\delta}}\left\vert 
\sum_{j=0}^{\lambda_{\mu_n}}
a_jz^{j}\right\vert<\varepsilon/4.$$
Using Lemma \ref{TuranIneq} and Cauchy estimates, we get for every $n\geq n_0$ and 
$j=0,\dots, \lambda_{\mu_n},$ 
\begin{equation}\label{equafond0}\vert a_{j}\vert^{1/j}  \leq \frac{\varepsilon^{1/j}}{4^{1/j}R}
C_{\delta}^{\lambda_{\mu_{n}}/{j}}.
\end{equation} 
Taking into account (\ref{equ_dens_1}), we get for every $n\geq n_0$ and 
$j=1+\mu_n,\dots, \lambda_{\mu_n},$ 
\begin{equation}\label{equafond}\vert a_{j}\vert^{1/j}  \leq \frac{\varepsilon^{1/j}}{4^{1/j}}\frac{C_{\delta}^{d}}{R}.
\end{equation} 

By (\ref{equafond}) and (\ref{hypalpha}), we deduce that there exists a positive integer 
$n_2\geq n_0$ such that for $n\geq n_2$ the following estimate holds
$$\sup_{z\in K}\vert\sum_{j=1+\mu_n}^{\lambda_{\mu_n}}a_jz^j\vert<\varepsilon/4.$$
Finally using the inequality $\sup_{z\in K}\vert S_{\lambda_{\mu_n}}(f)(z)\vert<\varepsilon/4,$ 
we have, for all $n\geq n_2,$ 
\begin{equation}\label{festim}\sup_{z\in K}\vert S_{{\mu_n}}(f)(z)\vert=\sup_{z\in K}\vert S_{\lambda_{\mu_n}}(f)(z)-
\sum_{j=1+\mu_n}^{\lambda_{\mu_n}}a_jz^j\vert
\leq \sup_{z\in K}\vert S_{\lambda_{\mu_n}}(f)(z)\vert+\sup_{z\in K}\vert 
\sum_{j=1+\mu_n}^{\lambda_{\mu_n}}a_jz^j\vert<\varepsilon/2.
\end{equation}
Combining (\ref{double-estim}) with (\ref{festim}) we obtain 
$$1\leq \sup_{z\in K}\vert S_{{\mu_n}}(f)(z)-1\vert+\sup_{z\in K}\vert S_{{\mu_n}}(f)(z)\vert\leq  3\varepsilon/4,$$
which is a contradiction. This completes the proof of the proposition.
\end{proof}

Further we are interested in the algebraic structure of $\mathcal{U}(\mathbb{D},(\lambda_n),0).$ First let us define the set of doubly universal Taylor series along a given subsequence. 

\begin{definition}\label{def_CT2}{\rm Let $(\lambda_n)$ and $\mu=(\mu_n)$ be increasing sequences of positive integers. 
A function $f\in H(\mathbb{D})$ belongs to the class $\mathcal{U}^{(\mu)}(\mathbb{D},(\lambda_n),0)$ if 
for every compact set $K\subset\mathbb{C}\setminus\mathbb{D}$ with connected complement and for every 
pair of functions $(g_1,g_2)\in A(K)\times A(K),$ there exists a subsequence of positive integers 
$(\nu_n)\subset \mu$ such that 
$$\sup_{z\in K}\vert S_{\nu_n}(f)(z)-g_1(z)\vert\rightarrow 0\hbox{ and }
\sup_{z\in K}\vert S_{\lambda_{\nu_n}}(f)(z)-g_2(z)\vert\rightarrow 0,\hbox{ as }
n\rightarrow +\infty.$$}
\end{definition}

\begin{remark}\label{rem_CT} {\rm Arguing as in the proof of Proposition \ref{propCT}, we obtain that the existence of universal elements in 
$\mathcal{U}^{(\mu)}(\mathbb{D},(\lambda_n),0)$ implies $\limsup_n\frac{\lambda_{\mu_n}}{\mu_n}=+\infty.$ 
On the other hand, the hypothesis $\limsup_n\frac{\lambda_{\mu_n}}{\mu_n}=+\infty$ implies that the 
set $\mathcal{U}^{(\mu)}(\mathbb{D},(\lambda_n),0)$ is $G_\delta$ and dense in $H(\mathbb{D})$. The proof works as in \cite[Proposition 4.1]{CT} with 
obvious modifications.}
\end{remark} 
Moreover a careful examination of the proof of Proposition \ref{propCT} gives the 
following lemma.

\begin{lemma}\label{lemma_double} Let $(\lambda_n)$ and $\mu=(\mu_n)$ be increasing sequences of positive integers. 
Let $f$ be in $\mathcal{U}^{(\mu)}(\mathbb{D},(\lambda_n),0).$ For every compact set $K\subset\mathbb{C}\setminus\mathbb{D},$ 
with $K^c$ connected, and for every pair of functions $(g_1,g_2)\in A(K)\times A(K),$ with $g_1\ne g_2,$ 
there exists a subsequence $(\nu_n)$ of $\mu$
with $\limsup_n\frac{\lambda_{\nu_n}}{\nu_n}=+\infty$ such that 
$$\sup_{z\in K}\vert S_{\nu_n}(f)(z)-g_1(z)\vert\rightarrow 0\hbox{ and }
\sup_{z\in K}\vert S_{\lambda_{\nu_n}}(f)(z)-g_2(z)\vert\rightarrow 0,\hbox{ as }
n\rightarrow +\infty.$$
\end{lemma}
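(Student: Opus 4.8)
The plan is to extract from the proof of Proposition \ref{propCT} the following quantitative principle: if along a subsequence $(\nu_n)\subset\mu$ one has $\lambda_{\nu_n}\le d\,\nu_n$ and the partial sums $S_{\lambda_{\nu_n}}(f)$ stay bounded on a circular arc $\Gamma_{R,\delta}$ with $R/C_\delta^{d}>\sup_{z\in K}\vert z\vert$, then $\sup_{z\in K}\vert S_{\nu_n}(f)(z)-S_{\lambda_{\nu_n}}(f)(z)\vert\to 0$. Since $g_1\ne g_2$, such a conclusion is incompatible with simultaneous approximation of $g_1$ by $S_{\nu_n}(f)$ and of $g_2$ by $S_{\lambda_{\nu_n}}(f)$ on $K$, so any approximating subsequence obtained from a sufficiently large arc must have unbounded ratios. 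The one delicate point is that the radius of the arc one has to use depends on the (a priori unknown) bound on those ratios, so I would not argue by a single contradiction but build the required subsequence scale by scale, using arcs of radius tending to infinity.

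Concretely, fix $K$ and $(g_1,g_2)\in A(K)\times A(K)$ with $g_1\ne g_2$, fix once and for all some $\delta\in(0,2\pi)$ with constant $C_\delta=\frac{4\pi e}{\delta}$, and for each integer $N\ge 1$ choose $R_N>1$ with $R_N/C_\delta^{N}>\sup_{z\in K}\vert z\vert$ and so that $K_N:=K\cup\Gamma_{R_N,\delta}$ is compact with connected complement (possible for $R_N$ large, exactly as in the proof of Proposition \ref{propCT}). Because $\Gamma_{R_N,\delta}$ has empty interior and is disjoint from $K$, extending $g_1$ and $g_2$ by $0$ on $\Gamma_{R_N,\delta}$ produces a pair $(\tilde g_1,\tilde g_2)\in A(K_N)\times A(K_N)$. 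Applying the hypothesis $f\in\mathcal U^{(\mu)}(\mathbb D,(\lambda_n),0)$ to $K_N$ and $(\tilde g_1,\tilde g_2)$ yields a subsequence $(\nu^{(N)}_k)_k\subset\mu$ with $\sup_{z\in K_N}\vert S_{\nu^{(N)}_k}(f)(z)-\tilde g_1(z)\vert\to 0$ and $\sup_{z\in K_N}\vert S_{\lambda_{\nu^{(N)}_k}}(f)(z)-\tilde g_2(z)\vert\to 0$ as $k\to+\infty$; in particular $S_{\lambda_{\nu^{(N)}_k}}(f)\to 0$ uniformly on $\Gamma_{R_N,\delta}$, hence these partial sums are uniformly bounded there by some $M_N<+\infty$.

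Next I would check that $\lambda_{\nu^{(N)}_k}/\nu^{(N)}_k>N$ for infinitely many $k$. If not, then $\lambda_{\nu^{(N)}_k}\le N\,\nu^{(N)}_k$ for all large $k$, and running the Tur\'an--Cauchy estimate of Proposition \ref{propCT} verbatim (Lemma \ref{TuranIneq} on $\Gamma_{R_N,\delta}$, followed by Cauchy's inequalities on $\vert z\vert=R_N$) gives, for all large $k$ and all $j$ with $\nu^{(N)}_k<j\le\lambda_{\nu^{(N)}_k}$, a bound $\vert a_j\vert\le M'_N\,(C_\delta^{N}/R_N)^{j}$ with $M'_N$ independent of $k$ (here one uses $\lambda_{\nu^{(N)}_k}<N j$). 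Summing the resulting geometric series over $j>\nu^{(N)}_k$ and using $R_N/C_\delta^{N}>\sup_{z\in K}\vert z\vert$ yields $\sup_{z\in K}\big\vert\sum_{j=1+\nu^{(N)}_k}^{\lambda_{\nu^{(N)}_k}}a_jz^j\big\vert\to 0$, whence $\sup_{z\in K}\vert S_{\nu^{(N)}_k}(f)(z)-S_{\lambda_{\nu^{(N)}_k}}(f)(z)\vert\to 0$; letting $k\to+\infty$ forces $g_1=g_2$ on $K$, a contradiction.

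Finally I would assemble the subsequence diagonally: by the previous step, at each stage $N$ pick an index $\nu_N:=\nu^{(N)}_{k(N)}$ with $\lambda_{\nu_N}/\nu_N>N$, with $\sup_{z\in K}\vert S_{\nu_N}(f)(z)-g_1(z)\vert<1/N$ and $\sup_{z\in K}\vert S_{\lambda_{\nu_N}}(f)(z)-g_2(z)\vert<1/N$, and with $\nu_N>\nu_{N-1}$; all of this is possible since infinitely many $k$ qualify at stage $N$ and $\nu^{(N)}_k\to+\infty$. Then $(\nu_N)_N$ is a subsequence of $\mu$ along which $S_{\nu_N}(f)\to g_1$ and $S_{\lambda_{\nu_N}}(f)\to g_2$ uniformly on $K$, while $\lambda_{\nu_N}/\nu_N>N$ gives $\limsup_N\lambda_{\nu_N}/\nu_N=+\infty$, as required. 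The main obstacle is precisely the coupling between the arc radius $R_N$ and the target bound $N$ on the ratios; once one accepts that a single compact set cannot do the job and lets the arcs grow while reading off one good index per scale, everything else is the computation already carried out in Proposition \ref{propCT}, together with the fact that $g_1\ne g_2$ on $K$.
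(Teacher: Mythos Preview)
Your proof is correct and rests on the same core estimate as the paper's (Tur\'an's inequality on an arc followed by Cauchy's inequalities, exactly as in Proposition~\ref{propCT}), but you are considerably more careful than the paper about one point.

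The paper's proof fixes a single radius $R$ with $R/C_\delta^{d}>\sup_{z\in K}|z|$, obtains an approximating subsequence $(\nu_n)\subset\mu$ on the enlarged set $K\cup\Gamma_{R,\delta}$, and then says ``arguing as in the end of the proof of Proposition~\ref{propCT}'' to conclude $\limsup_n\lambda_{\nu_n}/\nu_n=+\infty$. The parameter $d$ there is never defined; implicitly it is the putative finite bound on $\lambda_{\nu_n}/\nu_n$, which is only known \emph{after} extracting $(\nu_n)$, while $R$ was chosen \emph{before}. So as written the paper's argument is circular, or at best a sketch that suppresses exactly the step you supply. Your scale-by-scale construction (choose $R_N$ with $R_N/C_\delta^N>\sup_K|z|$, extract a subsequence on $K\cup\Gamma_{R_N,\delta}$, show by the Tur\'an--Cauchy estimate that the ratio exceeds $N$ infinitely often since otherwise $g_1=g_2$, then diagonalize) is the honest way to turn that sketch into a proof, and it yields a single subsequence with $\lambda_{\nu_N}/\nu_N>N$ rather than merely ``for each $d$ some subsequence with limsup $>d$''.

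In short: same method, but your version closes the gap the paper leaves open. The extension of $g_1,g_2$ by $0$ on the arc to land in $A(K_N)$, and the fact that $K\cup\Gamma_{R_N,\delta}$ has connected complement (an arc does not separate the plane), are both fine as you use them.
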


\begin{proof} As in the proof of Proposition \ref{propCT}, let us choose $R>0$ so that 
$\frac{R}{C_{\delta}^{d}}>\sup_{z\in K}\vert z\vert$ and consider the compact set 
$K_{R,\delta}:=\Gamma_{R,\delta}\cup K.$ Since 
$f\in \mathcal{U}^{(\mu)}(\mathbb{D},(\lambda_n),0)$ there exists an increasing $(\nu_n)\subset\mu$ of positive integers 
such that 
$$\sup_{z\in K_{r,\delta}}\vert S_{\nu_n}(f)(z)-g_1(z)\vert\rightarrow 0\hbox{ and }
\sup_{z\in K_{r,\delta}}\vert S_{\lambda_{\nu_n}}(f)(z)-g_2(z)\vert\rightarrow 0,\hbox{ as }n\rightarrow +\infty.$$ 
Arguing as in the end of the proof of Proposition \ref{propCT}, we deduce that we have necessary  
$\limsup_n\frac{\lambda_{\nu_n}}{\nu_n}=+\infty.$
\end{proof}

Combining Remark \ref{rem_CT} with Lemma \ref{lemma_double} we get that the set $\mathcal{U}(\mathbb{D},(\lambda_n),0)\cup\{0\}$ 
is algebraically generic. 

\begin{theorem}\label{sub_vec} Let $(\lambda_n)$ be a strictly increasing sequence of positive integers such that 
$\limsup_n\left(\frac{\lambda_n}{n}\right)=+\infty.$ The set $\mathcal{U}(\mathbb{D},(\lambda_n),0)\cup\{0\}$ contains a dense vector subspace 
of $H(\mathbb{D}).$
\end{theorem}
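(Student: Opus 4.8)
The plan is to construct a countable family of functions whose span lies (apart from $0$) inside $\mathcal{U}(\mathbb{D},(\lambda_n),0)$ and is dense in $H(\mathbb{D})$. The standard device for densely lineable universality results is to build the subspace on top of a fixed countable dense set $\{p_j : j\geq 1\}$ of $H(\mathbb{D})$, say the polynomials with rational (Gaussian) coefficients. I would fix an enumeration of all the approximation data, i.e.\ a sequence of triples $(K_i, g_{1,i}, g_{2,i})$ exhausting (in the usual Baire-category bookkeeping sense) all compact sets $K\subset\mathbb{C}\setminus\mathbb{D}$ with connected complement and all pairs of polynomials with Gaussian coefficients on such $K$, and then inductively choose functions $f_j\in H(\mathbb{D})$, each in fact a polynomial, together with a common subsequence, so that the partial sums of $p_j + f_j$ carry out the double approximation. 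The point is that, because $\limsup_n \lambda_n/n = +\infty$, for any prescribed gap between a ``low'' index $m$ and a ``high'' index $\lambda_m$ one can find $m$ with $\lambda_m/m$ as large as desired, which is exactly the room needed to place the two polynomial blocks (the one achieving $g_1$ near degree $m$, the one achieving $g_2-g_1$ between degrees $m$ and $\lambda_m$) while keeping everything small on $\mathbb{D}$.

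Concretely, I would proceed as follows. Using Remark \ref{rem_CT}, pass to a subsequence $\mu=(\mu_n)$ along which the asymptotic density is favourable; more precisely, the hypothesis $\limsup_n \lambda_n/n=+\infty$ lets us select $\mu$ so that $\mathcal{U}^{(\mu)}(\mathbb{D},(\lambda_n),0)$ is $G_\delta$ and dense. Then fix a function $g$ in this set (or rather work directly with the $G_\delta$-dense structure). To get the \emph{vector subspace}, the cleanest route is the abstract criterion for dense lineability: if $M$ is a dense subset of a topological vector space $X$ that is contained in some $G_\delta$ set $A$, and if $M + M \subseteq A\cup\{0\}$ and $\lambda M\subseteq A\cup\{0\}$ for all scalars $\lambda\neq 0$, one can extract a dense subspace inside $A\cup\{0\}$. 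So I would aim to produce a dense set $M\subseteq \mathcal{U}(\mathbb{D},(\lambda_n),0)$ which is additionally stable in the required weak sense. This is where Lemma \ref{lemma_double} enters and does the real work: it guarantees that whenever $f\in\mathcal{U}^{(\mu)}(\mathbb{D},(\lambda_n),0)$ and we prescribe a pair $(g_1,g_2)$ with $g_1\neq g_2$ on a compact set $K$, the approximation already happens along a subsequence whose $\lambda_{\nu_n}/\nu_n$ blows up — so adding a fixed polynomial $p$ to $f$ only shifts $(g_1,g_2)$ to $(g_1+p, g_2+p)$, still a pair with distinct components when $g_1\neq g_2$, and the same mechanism applies. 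Hence $f + p$ remains doubly universal for every polynomial $p$, i.e.\ the translate of a fixed doubly universal function by the (dense) set of polynomials lands in $\mathcal{U}(\mathbb{D},(\lambda_n),0)\cup\{0\}$, and this translate set is dense.

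Putting it together: take $f_0\in\mathcal{U}^{(\mu)}(\mathbb{D},(\lambda_n),0)$, let $P$ denote the (dense, countable-dimensional) vector space of polynomials, and consider $V := \operatorname{span}\{f_0\} + P$... but one must be careful: a generic element is $c f_0 + p$, and for $c\neq 0$ this is $c(f_0 + p/c)$, which by the above is $c$ times a doubly universal function, hence doubly universal (universality is invariant under nonzero scaling because $(g_1,g_2)\mapsto (g_1/c, g_2/c)$ is still an admissible pair with distinct entries); for $c=0$ it is just a polynomial, not universal, but that is fine since the definition of densely lineable only requires the subspace minus $\{0\}$ — wait, polynomials are nonzero and not universal. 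So instead I would not include all of $P$ as a summand but rather run a direct inductive construction of a basis $(h_j)_{j\geq 1}$ of the desired subspace: having chosen $h_1,\dots,h_{j-1}$ (polynomials), choose $h_j$ a polynomial so close to $p_j - (h_1+\dots+h_{j-1})$ that $h_1+\dots+h_j$ is within $1/j$ of $p_j$ (ensuring density of the span), while simultaneously reserving, via Lemma \ref{lemma_double} and the $\mu$-bookkeeping, infinitely many high-degree blocks in $h_j$ that guarantee every nontrivial linear combination $\sum c_i h_i$ performs the double approximation of the $i$-th enumerated task. The main obstacle is exactly this simultaneity: one must verify that the blocks reserved for different tasks and different basis vectors do not interfere — they live in disjoint index ranges $(\mu,\lambda_\mu)$ chosen along the sparse sequence where $\lambda_n/n\to\infty$, so the Turán-type estimates of Proposition \ref{propCT} show the ``wrong'' blocks are uniformly negligible on the relevant compacta — and that the finitely many scalars in a given linear combination, being fixed, do not destroy the smallness estimates on $\mathbb{D}$, which is routine once the blocks are taken small enough at each stage. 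I expect the bookkeeping to be the only genuinely delicate point; each individual approximation step is precisely what Lemma \ref{lemma_double} and Remark \ref{rem_CT} already hand us.
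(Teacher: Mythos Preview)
Your proposal explores several directions but never settles on a working argument. The ``$\operatorname{span}\{f_0\}+P$'' route is correctly abandoned, and the abstract lineability criterion you invoke does not apply here: there is no reason for $\mathcal{U}(\mathbb{D},(\lambda_n),0)$ to be stable under addition. Your final sketch contains an internal inconsistency --- you take each $h_j$ to be a polynomial, yet also want to ``reserve infinitely many high-degree blocks in $h_j$''. More seriously, you never supply a mechanism by which a \emph{fixed} choice of $h_1,\dots,h_m$ makes $\sum_{i\le m} c_i h_i$ doubly universal for \emph{every} nonzero tuple of scalars and \emph{every} target pair $(g_1,g_2)$; saying ``the bookkeeping is the only delicate point'' is precisely where the proof lives.

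The paper's argument is structurally different. One does not build polynomials; each $f_l$ is chosen in the $G_\delta$ dense class $\bigcap_k \mathcal{U}^{(\mu^{k,l-1})}(\mathbb{D},(\lambda_n),0)$, close to a prescribed $h_l$, and then Lemma~\ref{lemma_double} is applied to extract nested subsequences $\mu^{k,l}\subset\mu^{k,l-1}$ along which $S_{\mu_n^{k,l}}(f_l)\to 1$ and $S_{\lambda_{\mu_n^{k,l}}}(f_l)\to 0$ on $K_k$. The role of Lemma~\ref{lemma_double} is not the one you assign it: it guarantees that these extracted subsequences still satisfy $\limsup_n \lambda_{\mu_n^{k,l}}/\mu_n^{k,l}=+\infty$, so by Remark~\ref{rem_CT} the next class $\mathcal{U}^{(\mu^{k,l})}$ is again $G_\delta$ dense and the induction can continue. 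Given $f=\alpha_1 f_1+\dots+\alpha_m f_m$ with $\alpha_m\neq 0$, one uses the double universality of $f_m$ along $\mu^{k,m-1}$ to hit the shifted target pair $\bigl(g_2-\sum_{i<m}\alpha_i,\ g_1\bigr)$; since the approximating subsequence sits inside every $\mu^{k,l}$ for $l<m$, each earlier $f_l$ contributes exactly the constants $\alpha_l$ and $0$ along it, and the pieces sum to $(g_2,g_1)$. The key idea you are missing is that only the \emph{last} function in the combination does the custom approximation, while all earlier ones have \emph{fixed, prearranged} limits along the common nested subsequence.
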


\begin{proof}  
We proceed as in the proof of \cite[Theorem 3]{bgnp} with essential modifications. 
Let us fix a dense sequence $(h_l)_l$ in $H(\mathbb{D}).$ In the following, $d_{H(\mathbb{D})}$ denotes the 
standard metric of $H(\mathbb{D}).$ Let $(K_m)$ be a family of compact sets with connected 
complement and $K_m\cap\mathbb{D}=\emptyset$ for every $m\in\mathbb{N}$ such that 
every compact subset $K\subset\{z\in\mathbb{C}:\vert z\vert\geq 1\},$ with 
$K^c$ connected, is contained in some $K_n,$ $n\in \mathbb{N}$ \cite[Lemma 2.1]{Nes}. 
We construct a sequence 
$(f_{l})_l$ in $H(\mathbb{D})$ and sequences $\mu^{k,l}$ of positive integers 
satisfying the following conditions, for any $k,l\geq 1,$
\begin{itemize}
\item $\mu^{k,l}$ is a subsequence of $\mu^{k,l-1}$, with $\mu^{k,0}=\mathbb{N},$
\item $d_{H(\mathbb{D})}(f_l,h_l)<2^{-l},$
\item $\limsup_n\frac{\lambda_{\mu_n^{k,l}}}{\mu_n^{k,l}}=+\infty,$
\item $f_l$ belongs to $\bigcap_{k\geq 1}\mathcal{U}^{(\mu^{k,l-1})}(\mathbb{D}, (\lambda_n),0),$
\item $\sup_{z\in K_k}\vert S_{\lambda_{\mu_n^{k,l}}}(f_l)\vert\rightarrow 0,$ 
$\sup_{z\in K_k}\vert S_{\mu_n^{k,l}}(f_l)-1\vert\rightarrow 0,$ and 
$d_{H(\mathbb{D})}(S_{\mu_n^{k,l}}(f_l,\zeta),f_l)\rightarrow 0,$ as $n\rightarrow +\infty.$ 
\end{itemize} 
To do this, observe that first we can choose $f_1$ in the dense set $\mathcal{U}(\mathbb{D},(\lambda_n),0)$ 
so that $d_{H(\mathbb{D})}(f_1,h_1)<2^{-1}.$ Therefore, applying Lemma \ref{lemma_double}, for any 
$k\geq 1,$ one may find a subsequence $\mu^{k,1}$ with $\limsup_n\frac{\lambda_{\mu_n^{k,1}}}{\mu_n^{k,1}}=+\infty$, 
such that  $\sup_{K_k}\vert S_{\lambda_{\mu_n^{k,1}}}(f_1)\vert\rightarrow 0$ and 
$\sup_{K_k}\vert S_{\mu_n^{k,1}}(f_1)-1\vert\rightarrow 0$ as $n\rightarrow +\infty.$ 

At step 2, we choose 
$f_2 \in \bigcap_{k\geq 1}\mathcal{U}^{(\mu^{k,1})}(\mathbb{D}, (\lambda_n),0),$ which is a $G_\delta$ and dense subset of 
$H(\mathbb{D}),$ with $d_{H(\mathbb{D})}(f_2,h_2)<2^{-2}.$ In particular, according to Lemma \ref{lemma_double} 
for any $k\geq 1,$ there exists a subsequence $\mu^{k,2}$ of 
$\mu^{k,1}$, with $\limsup_n\frac{\lambda_{\mu_n^{k,2}}}{\mu_n^{k,2}}=+\infty$ such that 
$\sup_{K_k}\vert S_{\lambda_{\mu_n^{k,2}}}(f_2)\vert\rightarrow 0$ and $\sup_{K_k}\vert S_{\mu_n^{k,2}}(f_2)-1\vert\rightarrow 0$ 
as $n\rightarrow +\infty.$ Then we repeat  
the same arguments to construct 
$(f_{l})_l$ in $H(\mathbb{D})$ and sequences $\mu^{k,l}$ of positive integers satisfying the above properties. To finish the proof, it is sufficient to check that the linear span of the $(f_l)$ is both dense in $H(\mathbb{D})$ and contained in 
$\mathcal{U}(\mathbb{D},(\lambda_n),0)$, except for the zero function. The density is clear by construction. Moreover let $f=\alpha_1f_1+\dots +\alpha_mf_m,$ with $\alpha_m\ne 0.$ Let 
us consider two polynomials $g_1,$ $g_2$ and a compact set $K$ with connected complement and 
$K\cap\Omega=\emptyset.$ There exists $k$ such that $K\subset K_k.$ 
Since $f_m\in \mathcal{U}^{(\mu^{k,m-1})}(\mathbb{D},(\lambda_n),0),$ there exists a sequence $(\gamma_n)$ of positive integers with 
$(\gamma_n)\subset \mu^{k,m-1}$ such that 
\begin{equation}\label{ds1}\sup_{z\in K_k}\vert S_{\lambda_{\gamma_n}}(\alpha_m f_m)(z)-g_1(z)\vert\rightarrow 0
\hbox{ and }\sup_{z\in K_k}\vert S_{\gamma_n}(\alpha_m f_m)(z)-(g_2(z)-\sum_{i=1}^{m-1}\alpha_i)\vert\rightarrow 0\hbox{ as }
n\rightarrow +\infty.
\end{equation}
Observe that $(\gamma_n)$ is a subsequence of any $\mu^{k,l}$ for $l\leq m-1.$ Hence by construction we have, for any $l\leq m-1,$ 
\begin{equation}\label{ds2}
\sup_{z\in K_k}\vert S_{\lambda_{\gamma_n}}(\alpha_l f_l)(z)\vert\rightarrow 0\hbox{ and }
\sup_{z\in K_k}\vert S_{\gamma_n}(\alpha_l f_l)(z)-\alpha_l\vert\rightarrow 0\hbox{ as }
n\rightarrow +\infty.
\end{equation}
Finally from (\ref{ds1}) and (\ref{ds2}) we get 
$$\sup_{z\in K_k}\vert S_{\lambda_{\gamma_n}}(f)(z)-g_1(z)\vert\rightarrow 0$$
and 
$$\sup_{z\in K_k}\vert S_{\gamma_n}(f)(z)-g_2(z)\vert \leq 
\sup_{z\in K_k}\vert S_{\gamma_n}(\alpha_mf_m)(z)-(g_2(z)-\sum_{i=1}^{m-1}\alpha_i)\vert +
\sum_{i=1}^{m-1}\sup_{z\in K_k}\vert S_{\gamma_n}(\alpha_if_i)(z)-\alpha_i\vert \rightarrow 0$$
as $n\rightarrow +\infty,$
which implies that $f$ belongs to 
$\mathcal{U}(\mathbb{D},(\lambda_n),0).$ 
\end{proof}

\section{Doubly universal infinitely differentiable functions}\label{S_d_infinitely} First let us introduce some notations and terminology. 
We consider the set $C_0^{\infty}(\mathbb{R})$ of functions $f\in C^{\infty}(\mathbb{R})$ with $f(0)=0.$ Its topology is defined by the seminorms $\sup_{ x\in [-m,m]}\vert f^{(j)}(x)\vert,$ $j,m\in \mathbb{N}$ and the associated standard translation-invariant metric will be denoted by $d_{C_0^{\infty}(\mathbb{R})}$. Moreover we will consider the classical space 
$\mathbb{R}^{\mathbb{N}}$ endowed with the metric $d_{\mathbb{R}^{\mathbb{N}}}$  defined 
by $d_{\mathbb{R}^{\mathbb{N}}}((u_n),(v_n))=\sum_{n\geq 0}2^{-n}(\max_{0\leq j\leq n}\vert u_j-v_j\vert/
(1+\max_{0\leq j\leq n}\vert u_j-v_j\vert)).$ 
The metric space $(\mathbb{R}^{\mathbb{N}},d_{\mathbb{R}^{\mathbb{N}}})$ is complete. \\
As far as we know Fekete exhibited the first example of universal series by showing that 
there exists a formal power series $\sum_{n\geq 1}a_nx^n$ with the following property: for every continuous function $g$ on $[-1,1]$ with $g(0)=0$ there exists an increasing 
sequence $(\lambda_n)$ of positive integers such that $\sup_{x\in [-1,1]}\vert\sum_{k=1}^{\lambda_n}a_kx^k-g(x)\vert\rightarrow 0,$ as 
$n\rightarrow +\infty$ \cite{Pal}. A slight modification of Fekete's proof combined with Borel's theorem allows to obtain $C^{\infty}$-function whose partial sums of 
its Taylor series around $0$ approximate every continuous functions vanishing at $0$ locally uniformly in $\mathbb{R}$ (see \cite{ge}). In the present section, we are going to obtain a natural extension of the results of Section \ref{S_d_CT} to the 
case of Fekete functions, exploiting the fact that we did not need to use advanced tools of potential theory to study the class of doubly (complex) universal Taylor series.  
 
First of all, let us mention a useful inequality for polynomials in many variables between the complex and 
the real sup-norms \cite{AronBeau,Klimek}. 
\begin{theorem} \label{thmAronBeau}
There exists a constant $C>1$ such that, for any polynomial $P$ of degree $n$ in $k$ variables 
with real coefficients, we have  
$$\sup_{\vert z_1\vert=\dots=\vert z_k\vert=1}\vert P(z_1,\dots,z_k)\vert 
\leq C^n \sup_{x_1,\dots,x_k\in [-1,1]}\vert P(x_1,\dots,x_k)\vert.$$
\end{theorem}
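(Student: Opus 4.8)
The plan is to derive the multivariate inequality from the one-variable Bernstein--Walsh type estimate by separating the variables one at a time. First I would recall the one-dimensional fact underlying the statement: there is an absolute constant $C>1$ such that for every polynomial $p$ of degree $n$ in one variable with complex coefficients,
$$\sup_{|z|=1}|p(z)|\leq C^n\sup_{x\in[-1,1]}|p(x)|.$$
This follows from the classical Bernstein--Walsh lemma applied to the ellipse-type level sets of the Green function of $\mathbb{C}\setminus[-1,1]$: the unit circle is contained in the level set $\{z:g_{[-1,1]}(z)\leq \log C\}$ for a suitable $C$, so any degree-$n$ polynomial satisfies $\sup_{|z|=1}|p|\leq e^{n\log C}\sup_{[-1,1]}|p|=C^n\sup_{[-1,1]}|p|$. (Equivalently one can use that $|p(z)|\leq |z+\sqrt{z^2-1}|^n\sup_{[-1,1]}|p|$ with the branch chosen so the modulus exceeds $1$ off $[-1,1]$, and bound $|z+\sqrt{z^2-1}|$ by a constant on $|z|=1$.) The precise value of $C$ is irrelevant; only its existence and independence of $n$ matter.

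Next I would run the standard induction on the number of variables $k$. For $k=1$ this is exactly the one-variable statement above. For the inductive step, fix $z_1,\dots,z_{k-1}$ with $|z_i|=1$ and regard $P(z_1,\dots,z_{k-1},\cdot)$ as a one-variable polynomial in the last slot; its degree in that slot is at most $n$, so
$$\sup_{|z_k|=1}|P(z_1,\dots,z_{k-1},z_k)|\leq C^n\sup_{x_k\in[-1,1]}|P(z_1,\dots,z_{k-1},x_k)|.$$
Now for each fixed $x_k\in[-1,1]$ the function $(z_1,\dots,z_{k-1})\mapsto P(z_1,\dots,z_{k-1},x_k)$ is a polynomial of degree at most $n$ in $k-1$ variables with real coefficients (the coefficients are real because we substitute a real number $x_k$), so the induction hypothesis gives
$$\sup_{|z_1|=\dots=|z_{k-1}|=1}|P(z_1,\dots,z_{k-1},x_k)|\leq C^n\sup_{x_1,\dots,x_{k-1}\in[-1,1]}|P(x_1,\dots,x_{k-1},x_k)|.$$
Taking the supremum over $x_k\in[-1,1]$ on both sides and chaining the two displayed inequalities yields $\sup_{|z_1|=\dots=|z_k|=1}|P|\leq C^{2n}\sup_{[-1,1]^k}|P|$, and replacing $C$ by $C^2$ (or, more carefully, by noting the exponent grows only like a fixed constant times $n$) gives a bound of the form $\tilde{C}^{n}\sup_{[-1,1]^k}|P|$ with $\tilde{C}$ independent of $k$ if one is slightly more economical — in fact one can keep the constant equal to $C$ by applying the one-variable estimate in each of the $k$ variables successively and absorbing, but since the theorem only asks for \emph{some} constant $C>1$ and the degree bound $C^n$, a harmless relabeling of the constant suffices.

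The only genuinely delicate point is making sure the reals/complex bookkeeping is consistent through the induction: after substituting a real value in one variable the remaining coefficients stay real, which is exactly what licenses the use of the inductive hypothesis (stated for real-coefficient polynomials), while the very last application of the one-variable lemma needs only its complex-coefficient version. A secondary subtlety, if one insists on the literal exponent $C^n$ rather than $C^{cn}$, is to apply the one-dimensional inequality in turn to each variable $z_1,\dots,z_k$ — passing from $\sup$ over $|z_1|=1$ (other $z_j$ on the circle) down to $x_1\in[-1,1]$, then the next variable, and so on — so that one picks up the factor $C$ exactly once per variable and the degree governing each step is still at most $n$; the final constant is then $C^k$ times a degree-free bound, and since $C^k\leq C^{kn}$ one may simply enlarge $C$ once to absorb the dimension, or quote the result as is with $C$ depending on nothing. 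I expect the write-up to be short: state the one-variable Bernstein--Walsh estimate, then one paragraph of induction on $k$.
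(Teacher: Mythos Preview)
The paper does not prove this theorem; it is quoted from \cite{AronBeau,Klimek}, together with the remark that one may take $C=1+\sqrt{2}$. So there is no argument in the paper to compare against, and I assess your proposal on its own merits.

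Your one-variable step is correct: the classical Bernstein--Walsh inequality gives $|p(z)|\le \|p\|_{[-1,1]}\,e^{\,n\,g_{[-1,1]}(z)}$, and on $|z|=1$ the Green function of $\mathbb{C}\setminus[-1,1]$ is bounded by $\log(1+\sqrt{2})$, so $C=1+\sqrt{2}$ works for $k=1$.

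The induction on $k$, however, does not deliver what the theorem asserts. Freezing all variables but one and applying the one-variable estimate contributes a factor $C^{d_i}$, where $d_i$ is the degree of $P$ in the $i$-th variable; iterating over $i=1,\dots,k$ yields $C^{d_1+\cdots+d_k}$. For a polynomial of \emph{total} degree $n$ each $d_i$ may itself equal $n$ (e.g.\ $P=z_1^n+\cdots+z_k^n$), so your bound is only $C^{kn}$. The sentences about ``picking up $C$ once per variable'' and ``enlarging $C$ once to absorb the dimension'' do not repair this: the constant you actually produce depends on $k$, whereas the theorem claims a single $C$ valid for all $n$ and all $k$. The genuine proofs use a multivariate input---either the Siciak extremal function of the cube (as in \cite{Klimek}), which for a product set equals the maximum of the one-variable Green functions and is therefore bounded on the unit torus by $\log(1+\sqrt{2})$ \emph{independently of $k$}, or the direct argument of \cite{AronBeau}. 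Your variable-by-variable reduction cannot see the total-degree constraint and so cannot recover this uniformity.

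That said, in the present paper the theorem is invoked only in one variable (inequality~(\ref{equ_disc}) in the proof of Proposition~\ref{direct_A0}), so your one-variable Bernstein--Walsh estimate already suffices for every application made here.
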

\noindent In Theorem \ref{thmAronBeau}, we can choose the constant $C$ to be $1+\sqrt{2}$ \cite{AronBeau,Klimek}. 

\begin{definition}{\rm  Let $(\lambda_n)$ be a strictly increasing sequence of positive integers. A function 
$f\in  C_0^{\infty}(\mathbb{R})$ belongs to the class $\mathcal{U}(C_0^{\infty}(\mathbb{R}),(\lambda_n))$ if for every compact set $K\subset\mathbb{R}$ and for every pair $(h_1,h_2)$ of continuous functions $h_1,h_2:\mathbb{R}\rightarrow\mathbb{R}$ vanishing at 
zero, there exists a subsequence of positive integers $(\mu_n)$ such that 
$$\sup_{x\in K}\left\vert \sum_{k=0}^{\lambda_{\mu_n}}\frac{f^{(k)}(0)}{k!}x^k-h_1(x)\right\vert\rightarrow 0\hbox{ and }
\sup_{x\in K}\left\vert \sum_{k=0}^{\mu_n}\frac{f^{(k)}(0)}{k!}x^k-h_2(x)\right\vert\rightarrow 0\hbox{ as }n\rightarrow +\infty.$$}
\end{definition}
Now we can state a version of Proposition \ref{propCT} in this context.

\begin{proposition}\label{direct_A0} Let $(\lambda_n)$ be a strictly increasing sequence of positive integers. 
Assume that $\limsup_n\left(\frac{\lambda_n}{n}\right)<+\infty.$ Then the set 
$\mathcal{U}(C_0^{\infty}(\mathbb{R}),(\lambda_n))$ is empty. 
\end{proposition}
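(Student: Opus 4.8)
The plan is to mimic the proof of Proposition \ref{propCT}, replacing Tur\'an's inequality (for polynomials with few nonzero coefficients on a circle) with the Aron--Beauzamy--type estimate of Theorem \ref{thmAronBeau} together with a one-variable Tur\'an-type control, and to exploit the fact that doubly universal approximation on a large real interval forces the tail coefficients of the formal Taylor series to be small. More precisely, suppose for contradiction that $f\in\mathcal{U}(C_0^{\infty}(\mathbb{R}),(\lambda_n))$ with Taylor coefficients $a_k=f^{(k)}(0)/k!$, and pick $d>0$ with $\lambda_n\le dn$ for all $n$. I would fix a compact set $K\subset\mathbb{R}$, say $K=[-\rho,\rho]$ with $\rho>1$, and choose a larger interval $[-A,A]$ (with $A$ depending on $d$, $\rho$ and the constant $C$ of Theorem \ref{thmAronBeau}) on which we approximate the pair $(h_1,h_2)=(0,1)$; concretely I want $A/C^{d}>\rho$, in analogy with the condition $R/C_\delta^{d}>\sup_{z\in K}|z|$ in \eqref{hypalpha}.

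The core estimate comes from the following: doubly universality gives an increasing sequence $(\mu_n)$ with $\sup_{x\in[-A,A]}|S_{\mu_n}(f)(x)-1|<\varepsilon/4$ and $\sup_{x\in[-A,A]}|S_{\lambda_{\mu_n}}(f)(x)|<\varepsilon/4$ for $n$ large (here $S_m(f)(x)=\sum_{k=0}^m a_kx^k$). In particular $\sup_{x\in[-A,A]}|\sum_{k=0}^{\lambda_{\mu_n}}a_kx^k|<\varepsilon/4$. Applying Theorem \ref{thmAronBeau} in one variable (with the rescaling $x\mapsto Ax$, so the polynomial $Q(x)=\sum_{k=0}^{\lambda_{\mu_n}}a_k A^k x^k$ has degree $\lambda_{\mu_n}$) yields $\sup_{|z|=1}|\sum_{k=0}^{\lambda_{\mu_n}}a_kA^kz^k|\le C^{\lambda_{\mu_n}}\varepsilon/4$, hence by Cauchy estimates $|a_k|A^k\le C^{\lambda_{\mu_n}}\varepsilon/4$ for each $0\le k\le\lambda_{\mu_n}$. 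Thus $|a_k|^{1/k}\le (\varepsilon/4)^{1/k} C^{\lambda_{\mu_n}/k}/A$, and using $\lambda_{\mu_n}\le d\mu_n$ together with $k>\mu_n$ we get, for $\mu_n<k\le\lambda_{\mu_n}$, the bound $|a_k|^{1/k}\le (\varepsilon/4)^{1/k}C^{d}/A$, exactly as in \eqref{equafond}. Since $A/C^{d}>\rho$, this forces $\sup_{x\in K}|\sum_{k=\mu_n+1}^{\lambda_{\mu_n}}a_kx^k|<\varepsilon/4$ for $n$ large.

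The contradiction is then produced exactly as in Proposition \ref{propCT}: writing $S_{\mu_n}(f)=S_{\lambda_{\mu_n}}(f)-\sum_{k=\mu_n+1}^{\lambda_{\mu_n}}a_kx^k$ on $K$ gives $\sup_{x\in K}|S_{\mu_n}(f)(x)|<\varepsilon/2$, which combined with $\sup_{x\in K}|S_{\mu_n}(f)(x)-1|<\varepsilon/4$ yields $1\le 3\varepsilon/4$, absurd for small $\varepsilon$. One should note that taking $K$ to contain a real point of modulus $>1$ is harmless: any compact $K\subset\mathbb{R}$ is contained in such an interval $[-\rho,\rho]$, and if the pair $(0,1)$ cannot be approximated on the larger set then it cannot be approximated on a set whose supremum of moduli equals $\rho$ either — so I simply run the argument with $K=[-\rho,\rho]$ from the start.

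The only genuinely new ingredient, and the step I expect to require the most care, is the substitution of the complex-versus-real polynomial inequality (Theorem \ref{thmAronBeau}, used here in one variable after rescaling to $[-1,1]$) for Tur\'an's inequality: one must check that the degree of the rescaled polynomial is exactly $\lambda_{\mu_n}$ so that the exponent in $C^{\lambda_{\mu_n}}$ is correct, and that the Cauchy estimates on $|z|=1$ transfer back to bounds on $|a_k|A^k$ without losing the crucial $A$ in the denominator. Everything else is a transcription of the complex argument with $C$ in place of $C_\delta$ and $[-A,A]$ in place of $\Gamma_{R,\delta}$.
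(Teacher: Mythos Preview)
Your argument has a genuine gap: the pair $(h_1,h_2)=(0,1)$ is \emph{not} admissible in the definition of $\mathcal{U}(C_0^{\infty}(\mathbb{R}),(\lambda_n))$. That class only guarantees approximation of pairs of continuous functions $h_1,h_2:\mathbb{R}\to\mathbb{R}$ with $h_1(0)=h_2(0)=0$, and the constant $1$ fails this. In fact the approximation you request is impossible for a trivial reason regardless of universality: since $f\in C_0^{\infty}(\mathbb{R})$ one has $a_0=f(0)=0$, hence $S_{\mu_n}(f)(0)=0$ for every $n$, and $0\in[-A,A]$, so $\sup_{x\in[-A,A]}|S_{\mu_n}(f)(x)-1|\ge 1$ always. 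Thus the very first step of your contradiction argument never gets off the ground.

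The fix is to replace the target $h_2=1$ by a nonzero continuous function vanishing at $0$; the paper uses $h_2(x)=x$ on $[-1,1]$. Everything else in your outline (rescale to $[-1,1]$, apply Theorem~\ref{thmAronBeau}, use Cauchy estimates to bound $|a_j|^{1/j}$ by $(\varepsilon/4)^{1/j}C^{d}/A$ for $\mu_n<j\le\lambda_{\mu_n}$, then control the tail on a smaller interval) is correct and is exactly the paper's strategy. The only change at the end is that the contradiction no longer reads $1\le 3\varepsilon/4$; instead, working on the small interval $[-1/(2C^d),1/(2C^d)]$ (the paper takes $A=1$ and shrinks, rather than enlarging to $A$ and returning to $[-\rho,\rho]$), one obtains
\[
\frac{1}{2C^d}=\sup_{|x|\le 1/(2C^d)}|x|\le \sup_{|x|\le 1/(2C^d)}|S_{\mu_n}(f)(x)|+\sup_{|x|\le 1/(2C^d)}|S_{\mu_n}(f)(x)-x|<\varepsilon,
\]
which contradicts the initial choice $\varepsilon<1/(2C^d)$. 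Your concern about the exponent in $C^{\lambda_{\mu_n}}$ is harmless: the degree of the rescaled polynomial is at most $\lambda_{\mu_n}$ and $C>1$, so the stated bound holds in any case.
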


\begin{proof} We argue by contradiction. Take $f$ in 
$\mathcal{U}(C_0^{\infty}(\mathbb{R}),(\lambda_n))$ and set, for every $k\geq 0,$ $a_k=\frac{f^{(k)}(0)}{k!}.$ 
Since we have $\limsup_n\left(\frac{\lambda_n}{n}\right)<+\infty,$ 
there exists $d>0$ such that 
\begin{equation}\label{estim_dens}
\forall n\in\mathbb{N},\ \lambda_n\leq dn.
\end{equation} 
Let us fix 
\begin{equation}\label{choice_epsilon}
0<\varepsilon <\frac{1}{2C^d},
\end{equation} 
where $C$ is the absolute constant given by 
Theorem \ref{thmAronBeau}. 
Since $f$ belongs to $\mathcal{U}(C_0^{\infty}(\mathbb{R}),(\lambda_n)),$ there exists a subsequence $(\mu_n)$ of positive integers such that, for any $n\geq n_0,$ 
\begin{equation}\label{estim_epsilon_1}
\sup_{x\in [-1,1]}\vert \sum_{j=1}^{\mu_n}a_jx^j-x\vert <\varepsilon/4\hbox{ and }
\sup_{x\in [-1,1]}\vert \sum_{j=1}^{\lambda_{\mu_n}}a_j x^j\vert <\varepsilon/4.
\end{equation}
Using Theorem \ref{thmAronBeau} we get, for every $n\geq n_0,$
\begin{equation}\label{equ_disc}\sup_{\vert z\vert = 1}\left\vert 
\sum_{j=1}^{\lambda_{\mu_n}}
a_j z^{j}\right\vert\leq C^{\lambda_{\mu_n}}\frac{\varepsilon}{4}.
\end{equation} 
%Then from (\ref{equ_disc}) we obtain by the Bernstein-Walsh-Lemma \cite{Ransford}, for every $n\geq n_0,$
%$$\sup_{\vert z-1/2\vert\leq 3/2}\left\vert 
%\sum_{j=1}^{\lambda_{\mu_n}}
%a_j z^{j}\right\vert<(3C)^{\lambda_{\mu_n}}\frac{\varepsilon}{2}.$$
%Therefore we get, for every $n\geq n_0,$ 
%$$\sup_{\vert z\vert =1}\left\vert\sum_{j=1}^{\lambda_{\mu_n}}
%a_j z^{j}\right\vert<(3C)^{\lambda_{\mu_n}}\frac{\varepsilon}{2}.$$
It follows from Cauchy's formula, for every $n\geq n_0$ and 
$j=0,\dots, \lambda_{\mu_n},$ 
\begin{equation}\label{equafond_10}\vert a_{j}\vert^{1/j}  \leq \frac{\varepsilon^{1/j}}{4^{1/j}}
C^{\lambda_{\mu_{n}}/{j}}.
\end{equation}
From (\ref{estim_dens}) we deduce, for $j=1+\mu_n,\dots,\lambda_{\mu_n},$
\begin{equation}\label{equafond_10_11}\vert a_{j}\vert^{1/j}  \leq \frac{\varepsilon^{1/j}}{4^{1/j}}
C^{d},
\end{equation}
and therefore we get, for $n\geq n_0,$ 
$$\sup_{x\in [-1/(2C^d),1/(2C^d)]}\vert\sum_{j=1+\mu_n}^{\lambda_n}a_jx^j\vert\leq 
\frac{\varepsilon}{4}\sum_{j=1+\mu_n}^{\lambda_n} \frac{1}{2^j}\leq 
\frac{\varepsilon}{4}.$$
Finally using (\ref{estim_epsilon_1}) we have, for all $n\geq n_0,$ 
\begin{equation}\label{festim2_1}\sup_{x\in [-1/(2C^d),1/(2C^d)]}\vert \sum_{j=1}^{\mu_n}a_j x^j\vert
\leq \sup_{x\in [-1/(2C^d),1/(2C^d)]}\vert \sum_{j=1}^{\lambda_{\mu_n}}a_j x^j\vert+\sup_{x\in [-1/(2C^d),1/(2C^d)]}\vert 
\sum_{j=1+\mu_n}^{\lambda_{\mu_n}}a_jx^j\vert<\varepsilon/2.
\end{equation}
Combining (\ref{estim_epsilon_1}) with (\ref{festim2_1}) we obtain, for $n\geq n_0,$ 
$$\begin{array}{rcl}\displaystyle\frac{1}{2C^d}=\sup_{x\in [-1/(2C^d),1/(2C^d)]}\vert x\vert&\leq &
\displaystyle\sup_{x\in [-1/(2C^d),1/(2C^d)]}\vert 
\sum_{j=1}^{\mu_n}a_j x^j\vert + 
\sup_{x\in [-1/(2C^d),1/(2C^d)]}\vert \sum_{j=1}^{\mu_n}a_j x^j-x\vert\\
&\leq&\displaystyle \frac{\varepsilon}{2}+
\frac{\varepsilon}{4}<\varepsilon.\end{array}$$
This last inequality gives a contradiction with (\ref{choice_epsilon}). This completes the proof.
\end{proof}

To obtain the converse result, we will follow the main ideas of the proof of \cite[Proposition 4.1]{CT}. 
First we need a quantitative approximation polynomial lemma which will play the role of \cite[Theorem 2.1]{CT}. We have to approximate a 
given continuous function vanishing at $0$ by polynomials whose both degrees and valuations are imposed. Exploiting the form of the classical Bernstein polynomials, 
we begin with the case where the approximation takes place on compact subsets of $[0,+\infty).$  

\begin{lemma}\label{Bernstein_lemma} Let $(l_n)$ and $(m_n)$ be two strictly increasing sequences of positive integers such that 
$l_n\leq m_n$ and $\frac{m_n}{l_n}\rightarrow +\infty$ as $n\rightarrow +\infty.$ Let $A>0.$ For every continuous function $h:\mathbb{R_+}\rightarrow \mathbb{R},$ with $h(0)=0,$ there exists 
a sequence $(P_n)$ of real polynomials of the form $P_n(x)=\sum_{k=l_n}^{m_n}c_{n,k}x^k,$ such that
$$\sup_{x\in [0,A]}\vert P_n(x)-h(x)\vert\rightarrow 0,\hbox{ as }n\rightarrow +\infty.$$
\end{lemma}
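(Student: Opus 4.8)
The plan is to build the polynomials $P_n$ directly from the classical Bernstein polynomials of $h$, exploiting the two features we need: (i) a Bernstein polynomial of degree $N$ associated to a function on $[0,A]$ automatically has degree $\le N$, and (ii) if the function vanishes on a neighbourhood of $0$, then the low-order Bernstein coefficients vanish, so the valuation can be pushed up. Concretely, fix $n$ and set $\delta_n = A l_n/m_n$, which tends to $0$ since $m_n/l_n\to+\infty$. First I would replace $h$ by a continuous function $h_n$ that agrees with $h$ on $[\delta_n,A]$, vanishes on $[0,\delta_n/2]$, and satisfies $\sup_{[0,A]}|h_n-h|\le \omega_h(\delta_n)$ where $\omega_h$ is the modulus of continuity of $h$ on $[0,A]$ (this works because $h(0)=0$, so $h$ is small near $0$); here one just interpolates linearly on $[\delta_n/2,\delta_n]$. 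Then $h_n\to h$ uniformly on $[0,A]$.

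Next I would take $P_n$ to be the Bernstein polynomial of $h_n$ on $[0,A]$ of degree $m_n$:
\[
P_n(x) \;=\; \sum_{k=0}^{m_n} h_n\!\left(\tfrac{kA}{m_n}\right)\binom{m_n}{k}\left(\frac{x}{A}\right)^{k}\left(1-\frac{x}{A}\right)^{m_n-k}.
\]
By the classical Bernstein approximation theorem (in the quantitative form with the modulus of continuity), $\sup_{x\in[0,A]}|P_n(x)-h_n(x)|\to 0$, hence $\sup_{x\in[0,A]}|P_n(x)-h(x)|\to 0$. It remains to check the shape $P_n(x)=\sum_{k=l_n}^{m_n}c_{n,k}x^k$. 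The degree bound is automatic: $P_n$ is a polynomial of degree $\le m_n$. For the valuation, note that if $kA/m_n \le \delta_n/2$, i.e. $k\le l_n/2$, then $h_n(kA/m_n)=0$, so those terms drop out of the sum; thus only indices $k> l_n/2$ contribute. Expanding each surviving Bernstein basis function, the monomial $x^j$ with $j$ smallest comes from the term of index $k$ with the factor $(x/A)^k$ multiplied by the constant term of $(1-x/A)^{m_n-k}$, giving lowest degree exactly $k> l_n/2$. So $P_n$ has valuation $> l_n/2$; to get valuation $\ge l_n$ on the nose, I would instead choose $h_n$ to vanish on $[0,\delta_n]$ with $\delta_n = Al_n/m_n$ exactly (so that $h_n(kA/m_n)=0$ whenever $k\le l_n$), which still forces $\delta_n\to 0$ and keeps the uniform approximation intact; then the valuation of $P_n$ is $\ge l_n+1 > l_n$, and a harmless reindexing (or just noting $l_n\le m_n$ and padding with zero coefficients) puts it in the stated form $\sum_{k=l_n}^{m_n}c_{n,k}x^k$.

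The only genuine point requiring care — the main obstacle — is the interplay between the valuation constraint and the approximation: we must vanish $h$ on the growing-relative-to-nothing but absolutely-shrinking interval $[0,\delta_n]$ with $\delta_n=Al_n/m_n\to 0$, and simultaneously keep the Bernstein degree equal to $m_n$ so that exactly the coefficients of $x^k$ for $k\le l_n$ are killed. This is precisely where the hypothesis $m_n/l_n\to+\infty$ is used: it guarantees $\delta_n\to 0$, so modifying $h$ on $[0,\delta_n]$ costs only $O(\omega_h(\delta_n))\to 0$ in the sup-norm, and the modification does not interfere with convergence on any fixed compact subset of $[0,A]$. Everything else (the quantitative Bernstein estimate, the expansion of the basis polynomials, the reindexing) is routine.
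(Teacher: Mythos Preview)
Your proposal is correct and uses essentially the same idea as the paper: take the degree-$m_n$ Bernstein polynomial on $[0,A]$ of a continuous modification of $h$ that vanishes near $0$, so that the nodes $Ak/m_n$ with $k\le l_n$ contribute nothing and the valuation is at least $l_n$, the hypothesis $m_n/l_n\to\infty$ being exactly what makes the modification interval shrink. The only difference is organizational: you let the truncation interval $[0,\delta_n]$ depend on $n$ and invoke a quantitative Bernstein bound (which, since the target $h_n$ varies, implicitly needs either a uniform control on the moduli of continuity of the $h_n$ or, more simply, the fact that the Bernstein operator has norm $1$ so $\|B_{m_n}(h_n)-B_{m_n}(h)\|\le\|h_n-h\|$), whereas the paper fixes $\varepsilon>0$, truncates once on a fixed interval $[0,\eta/2]$ with $\eta=\eta(\varepsilon)$, and applies the qualitative Bernstein theorem to that single modified function.
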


\begin{proof} Let $\varepsilon>0.$ 
By continuity of the function $h$ at $0,$ one can find $\eta>0$ such that for all $x\in [0,\eta),$ 
$\vert h(x)\vert <\varepsilon/4.$ Let us consider the continuous function $\tilde{h}$ defined on $\mathbb{R}_+$ by
$$\tilde{h}(x)=\left\{\begin{array}{ll}0&\hbox{ for }0\leq x\leq \eta/2\\
\frac{2 h(\eta)}{\eta}(x-\eta/2)&\hbox{ for }\eta/2\leq x\leq \eta\\
h(x)&\hbox{ for }x\geq \eta\end{array}\right.$$ 
Then, for every $n\geq 1,$ let us consider 
$B_{m_n}(\tilde{h})$ its Bernstein polynomial of degree $m_n,$ given by 
$$B_{m_n}(\tilde{h})(x)=\frac{1}{A^{m_n}}\sum_{k=0}^{m_n}{m_n\choose k}\tilde{h}\left(A\frac{k}{m_n}\right)
x^k(A-x)^{m_n-k}.$$
Moreover since the sequence $(l_n/m_n)$ converges to $0,$ 
there exists a positive integer $N_1$ such that, for every $n\geq N_1,$ $A l_n/m_n\leq \eta/2.$ Therefore by construction, for every $n\geq N_1$ and for $k= 0,1,\dots,l_n,$we have $\tilde{h}\left(A\frac{k}{m_n}\right)=0.$ The Bernstein 
polynomials of $\tilde{h}$ have the following form 
$$B_{m_n}(\tilde{h})(x)=\frac{1}{A^{m_n}}\sum_{k=l_n}^{m_n}{m_n\choose k}\tilde{h}\left(A\frac{k}{m_n}\right)
x^k(A-x)^{m_n-k}:=\sum_{k=l_n}^{m_n}c_{n,k}x^k.$$
Obviously the function $\tilde{h}$ is continuous on $[0,A].$ So it is known that the sequence 
$(B_{n}(\tilde{h}))$ converges uniformly to $\tilde{h}$ on $[0,A]$ \cite{Bernstein}. Thus we deduce the existence of a positive integer $N_2>N_1$ such that, for every $n\geq N_2,$ 
$$\sup_{x\in [0,A]}\vert B_{m_n}(\tilde{h})(x)-\tilde{h}(x)\vert < \varepsilon/2.$$
Now, for $x \leq \eta/2,$ since $\tilde{h}(x)=0,$ the triangle inequality gives 
$$\vert B_{m_n}(\tilde{h})(x)-h(x)\vert \leq \vert B_{m_n}(\tilde{h})(x)\vert +\vert h(x)\vert <
\varepsilon/2 +\varepsilon/4 < \varepsilon.$$
On the other hand, for $\eta/2\leq x\leq \eta,$ we have 
$$\vert B_{m_n}(\tilde{h})(x)-h(x)\vert \leq \vert B_{m_n}(\tilde{h})(x)-\tilde{h}(x)\vert +
\vert \tilde{h}(x)-h(x)\vert <\varepsilon/2 +\varepsilon/2=\varepsilon.$$
Finally for $x\geq \eta,$ we have $\tilde{h}(x)=h(x)$ and we get $\vert B_{m_n}(\tilde{h})(x)-h(x)\vert <\varepsilon.$
This completes the proof.
\end{proof}

Then we extend Lemma \ref{Bernstein_lemma} to the case of symmetric intervals $[-A,A].$ 

\begin{lemma}\label{Bernstein_lemma2} Let $(l_n)$ and $(m_n)$ be two strictly increasing sequences of positive integers such that 
$l_n\leq m_n$ and $\frac{m_n}{l_n}\rightarrow +\infty$ as $n\rightarrow +\infty.$ Let $A>0.$ For every continuous function $h:\mathbb{R}\rightarrow \mathbb{R},$ with $h(0)=0,$ there exists 
a sequence $(P_n)$ of real polynomials of the form $P_n(x)=\sum_{k=l_n}^{m_n}c_{n,k}x^k,$ such that
$$\sup_{x\in [-A,A]}\vert P_n(x)-h(x)\vert\rightarrow 0,\hbox{ as }n\rightarrow +\infty.$$
\end{lemma}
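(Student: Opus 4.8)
The plan is to reduce to Lemma \ref{Bernstein_lemma} by decomposing $h$ into even and odd parts. Write $h=h_e+h_o$ with $h_e(x)=\frac{1}{2}(h(x)+h(-x))$ and $h_o(x)=\frac{1}{2}(h(x)-h(-x))$; both are continuous on $\mathbb{R}$ and vanish at $0$. For the even part I would use that $h_e(x)=g(x^2)$, where $g(t):=h_e(\sqrt{t})$ is continuous on $[0,+\infty)$ with $g(0)=0$; applying Lemma \ref{Bernstein_lemma} to $g$ on $[0,A^2]$ with the sequences $(l_n)$ and $(\lfloor m_n/2\rfloor)$ --- which for $n$ large meet the requirements of that lemma, since only $\frac{m_n}{l_n}\to+\infty$, $l_n\le m_n$ and $m_n\to+\infty$ are genuinely used in its proof, and these properties pass to the pair $(l_n),(\lfloor m_n/2\rfloor)$ --- I obtain polynomials $Q_n(t)=\sum_{k=l_n}^{\lfloor m_n/2\rfloor}d_{n,k}t^{k}$ with $\sup_{[0,A^2]}|Q_n-g|\to 0$. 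Then $Q_n(x^2)=\sum_{k=l_n}^{\lfloor m_n/2\rfloor}d_{n,k}x^{2k}$ carries only even-degree monomials, of degrees in $[2l_n,2\lfloor m_n/2\rfloor]\subseteq[l_n,m_n]$, and $\sup_{x\in[-A,A]}|Q_n(x^2)-h_e(x)|=\sup_{t\in[0,A^2]}|Q_n(t)-g(t)|\to 0$.

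The odd part is the delicate point, because $x\mapsto h_o(x)/x$ need not be continuous at $0$, so $h_o$ cannot directly be written as $x$ times a function of $x^2$. Here I would reuse the flattening trick from the proof of Lemma \ref{Bernstein_lemma}: fix $\varepsilon>0$, pick $\delta\in(0,A)$ with $|h_o(x)|<\varepsilon$ for $|x|\le 2\delta$, and let $\tilde{h}$ be the continuous odd function equal to $h_o$ on $\{|x|\ge 2\delta\}$, equal to $0$ on $\{|x|\le\delta\}$, and affine on $[\delta,2\delta]$ and on $[-2\delta,-\delta]$; then $\sup_{[-A,A]}|\tilde{h}-h_o|\le 2\varepsilon$. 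Since $\tilde{h}$ is odd and vanishes near $0$, the function $w(x):=\tilde{h}(x)/x$ (with $w(0):=0$) is continuous and even, hence $w(x)=\omega(x^2)$ for a continuous $\omega$ on $[0,+\infty)$ with $\omega(0)=0$, and $\tilde{h}(x)=x\,\omega(x^2)$. Applying Lemma \ref{Bernstein_lemma} to $\omega$ on $[0,A^2]$ with the sequences $(l_n)$ and $(\lfloor (m_n-1)/2\rfloor)$ gives $R_n(t)=\sum_{k=l_n}^{\lfloor (m_n-1)/2\rfloor}e_{n,k}t^{k}$ with $\sup_{[0,A^2]}|R_n-\omega|\to 0$; then $xR_n(x^2)=\sum_{k=l_n}^{\lfloor (m_n-1)/2\rfloor}e_{n,k}x^{2k+1}$ carries only odd-degree monomials, of degrees in $[2l_n+1,2\lfloor (m_n-1)/2\rfloor+1]\subseteq[l_n,m_n]$, and $\sup_{x\in[-A,A]}|xR_n(x^2)-\tilde{h}(x)|\le A\,\sup_{t\in[0,A^2]}|R_n(t)-\omega(t)|\to 0$.

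Finally I would assemble $P_n^{\varepsilon}(x):=Q_n(x^2)+xR_n(x^2)$. Since the two summands carry, respectively, only even and only odd powers of $x$, no cancellation occurs and $P_n^{\varepsilon}$ has the required form $\sum_{k=l_n}^{m_n}c_{n,k}x^{k}$; moreover for $n$ large,
$$\sup_{x\in[-A,A]}|P_n^{\varepsilon}(x)-h(x)|\le \sup_{[-A,A]}|Q_n(x^2)-h_e(x)|+\sup_{[-A,A]}|xR_n(x^2)-\tilde{h}(x)|+\sup_{[-A,A]}|\tilde{h}-h_o|\le 3\varepsilon.$$
Taking $\varepsilon=1/j$, $j=1,2,\dots$, and splicing the sequences $(P_n^{1/j})_n$ together by a routine diagonal argument produces a single sequence $(P_n)$ of the required form with $\sup_{[-A,A]}|P_n-h|\to 0$. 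I expect the main obstacle to be precisely the odd part: one must flatten $h_o$ near $0$ at the price of a controlled error before dividing by $x$, and then keep track of parities and of the index windows so that $Q_n(x^2)+xR_n(x^2)$ stays within the prescribed degree range $[l_n,m_n]$.
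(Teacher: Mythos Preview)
Your proof is correct and follows the same underlying strategy as the paper: reduce to Lemma~\ref{Bernstein_lemma} by splitting into even and odd parts and using the substitution $t=x^{2}$. The execution differs slightly. The paper first flattens the whole function $h$ near $0$, then inserts an extra Weierstrass step to replace the flattened function by a polynomial $W(x)=x^{2}P(x)$, decomposes that \emph{polynomial} as $Q_{1}(x^{2})+xQ_{2}(x^{2})$, and only then applies Lemma~\ref{Bernstein_lemma} to $Q_{1}$ and $Q_{2}$. You instead decompose $h$ into $h_e+h_o$ at the outset, handle $h_e(x)=g(x^{2})$ with no flattening or intermediate polynomial (since $g(t)=h_e(\sqrt{t})$ is already continuous with $g(0)=0$), and flatten only $h_o$ before dividing by $x$. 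Your route is marginally more direct for the even part and avoids the auxiliary Weierstrass approximation; the paper's route avoids your explicit diagonal splicing by building the $\varepsilon$-dependence into a single polynomial $W$ up front. Both implicitly rely on the observation (which you correctly flag) that the strict monotonicity of the upper sequence in Lemma~\ref{Bernstein_lemma} is not actually used in its proof, so passing to $\lfloor m_n/2\rfloor$ or $\lfloor (m_n-1)/2\rfloor$ causes no trouble.
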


\begin{proof} Let $\varepsilon>0.$ 
By continuity of the function $h$ at $0,$ one can find $\eta>0$ such that for all $x\in [0,\eta),$ 
$\vert h(x)\vert <\varepsilon/4.$ Let us consider the continuous function $\tilde{h}$ defined on $\mathbb{R}$ by
$$\tilde{h}(x)=\left\{\begin{array}{ll}0&\hbox{ for }\vert x\vert \leq \eta/2\\
\frac{2 h(\eta)}{\eta}(x-\eta/2)&\hbox{ for }\eta/2\leq x\leq \eta\\
\frac{-2 h(-\eta)}{\eta}(x+\eta/2)&\hbox{ for }-\eta\leq x\leq -\eta/2\\
h(x)&\hbox{ for }\vert x\vert \geq \eta\end{array}\right.$$ 
Observe that we have 
\begin{equation}\label{first_htilde}
\sup_{x\in [-A,A]}\vert h(x)-\tilde{h}(x)\vert \leq \varepsilon/2.
\end{equation}
Define also the continuous function $g:\mathbb{R}\rightarrow \mathbb{R},$ 
$x\mapsto \tilde{h}(x)/x^2.$ By classical Weierstrass approximation theorem one can find a polynomial $P$ such that 
$$\sup_{x\in [-A,A]}\vert P(x)-g(x)\vert <\varepsilon/4A^2.$$ 
We deduce the following inequality 
\begin{equation}\label{second_htilde}
\sup_{x\in [-A,A]}\vert x^2P(x)-\tilde{h}(x)\vert <\varepsilon/4.
\end{equation} 
Set $W(x):=x^2P(x).$ Observe that $W(0)=W'(0)=0.$ Let us write 
$$W(x)=Q_1(x^2)+xQ_2(x^2)$$
where $Q_1$ and $Q_2$ are polynomials vanishing at $0.$ Then we apply Lemma \ref{Bernstein_lemma} to find two sequences 
of polynomials $P_{n,1}$ and $P_{n,2}$ of the form 
$$P_{n,1}(x)=\sum_{k=\lfloor l_n/2\rfloor+1}^{\lfloor m_n/2\rfloor}c_{n,k}^{(1)}x^k\hbox{ and }
P_{n,2}(x)=\sum_{k=\lfloor l_n/2\rfloor}^{\lfloor (m_n-1)/2\rfloor}c_{n,k}^{(2)}x^k$$ such that 
$$\sup_{x\in [0,A^2]}\vert P_{n,1}(x)-Q_1(x)\vert\rightarrow 0\hbox{ and }
\sup_{x\in [0,A^2]}\vert P_{n,2}(x)-Q_2(x)\vert\rightarrow 0,\hbox{ as }n\rightarrow +\infty.$$
For $n$ large enough we get
$$\sup_{x\in [-A,A]}\vert P_{n,1}(x^2)-Q_1(x^2)\vert <\varepsilon/8\hbox{ and }
\sup_{x\in [-A,A]}\vert P_{n,2}(x^2)-Q_2(x^2)\vert <\varepsilon/8A.$$
Thus by construction the polynomial $\tilde{W}(x):=P_{n,1}(x^2)+xP_{n,2}(x^2)$ has the following form 
\begin{equation}\label{formwtilde}
\tilde{W}(x)=\sum_{k=l_n}^{m_n}c_{n,k}x^k
\end{equation} 
and we have 
\begin{equation}\label{three_htilde}
\begin{array}{rcl}\displaystyle\sup_{x\in [-A,A]}\vert \tilde{W}(x)-W(x)\vert&\leq& 
\displaystyle\sup_{x\in [-A,A]}\vert P_{n,1}(x^2)-Q_1(x^2)\vert + \sup_{x\in [-A,A]}\vert x(P_{n,2}(x^2)-Q_2(x^2))\vert\\&
<&\displaystyle\frac{\varepsilon}{8}+A\frac{\varepsilon}{8A}=\frac{\varepsilon}{4}.\end{array}
\end{equation}
Finally combining the triangle inequality with (\ref{first_htilde}), (\ref{second_htilde}) and (\ref{three_htilde}), 
we get
\begin{equation}\label{estimatewtilde}
\begin{array}{rcl}\displaystyle\sup_{[-A,A]}\vert h-\tilde{W}\vert &\leq &\displaystyle
\sup_{[-A,A]}\vert h-\tilde{h}\vert + \sup_{[-A,A]}\vert \tilde{h}-W\vert + 
\sup_{[-A,A]}\vert W-\tilde{W}\vert \\&<&\displaystyle \frac{\varepsilon}{2}+\frac{\varepsilon}{4}+\frac{\varepsilon}{4}
=\varepsilon.\end{array}
\end{equation}
Thus the polynomial $\tilde{W}$ has the desired properties (given by (\ref{formwtilde}) and 
(\ref{estimatewtilde})). This completes the proof.
\end{proof}

Next we introduce an intermediate result. 

\begin{definition}{\rm  Let $(a_n)$ and $(b_n)$ be strictly increasing sequence of positive integers. A function 
$f\in  C_0^{\infty}(\mathbb{R})$ belongs to the class $\mathcal{U}(C_0^{\infty}(\mathbb{R}),(a_n),(b_n))$ if for every compact set $K\subset \mathbb{R}$ and for every continuous function $h:\mathbb{R}\rightarrow\mathbb{R}$ vanishing at 
zero, there exists a subsequence of positive integers $(\mu_n)$ such that 
$$\sup_{x\in K}\left\vert \sum_{k=0}^{a_{\mu_n}}\frac{f^{(k)}(0)}{k!}x^k-h(x)\right\vert\rightarrow 0\hbox{ and }
\sup_{x\in K}\left\vert \sum_{k=0}^{b_{\mu_n}}\frac{f^{(k)}(0)}{k!}x^k-h(x)\right\vert\rightarrow 0\hbox{ as }n\rightarrow +\infty.$$}
\end{definition}

\begin{proposition}\label{prop_interm} Let $(a_n)$ and $(b_n)$ be strictly increasing sequence of positive integers. Then 
the set $\mathcal{U}(C_0^{\infty}(\mathbb{R}),(a_n),(b_n))$ is $G_\delta$ and dense in $C_0^{\infty}(\mathbb{R}).$
\end{proposition}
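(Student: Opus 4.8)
The plan is to run the standard Baire category scheme for universality classes; everything is routine except the density step, in which Lemma~\ref{Bernstein_lemma2} produces the approximating polynomial we need.

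\emph{A $G_\delta$ presentation.} Write $S_p(f)(x)=\sum_{k=0}^{p}\frac{f^{(k)}(0)}{k!}x^k$, put $K_m=[-m,m]$, and let $(h_l)_{l\ge1}$ enumerate the polynomials with rational coefficients and vanishing constant term --- a family dense, for uniform convergence on compacta, in the continuous functions $\mathbb{R}\to\mathbb{R}$ vanishing at $0$. For $m,l,s,q\ge1$ set
$$E(m,l,s,q)=\bigcup_{N\ge q}\Bigl\{f\in C_0^\infty(\mathbb{R}):\ \sup_{K_m}\bigl|S_{a_N}(f)-h_l\bigr|<\tfrac1s\ \text{and}\ \sup_{K_m}\bigl|S_{b_N}(f)-h_l\bigr|<\tfrac1s\Bigr\}.$$
A routine verification gives $\mathcal{U}(C_0^\infty(\mathbb{R}),(a_n),(b_n))=\bigcap_{m,l,s,q}E(m,l,s,q)$, using the density of $(h_l)$ on each $K_m$, that every compact subset of $\mathbb{R}$ lies in some $K_m$, and that varying $s$ and $q$ yields a strictly increasing subsequence $(\mu_n)$. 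Each functional $f\mapsto f^{(k)}(0)$ is continuous on $C_0^\infty(\mathbb{R})$ --- it is dominated by the seminorm $\sup_{[-1,1]}|f^{(k)}|$ --- so $f\mapsto S_p(f)$ is continuous from $C_0^\infty(\mathbb{R})$ into $C(K_m)$, and each $E(m,l,s,q)$ is open. As $C_0^\infty(\mathbb{R})$ is a Fr\'echet, hence Baire, space, it is enough to prove that each $E(m,l,s,q)$ is dense.

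\emph{Reduction and the approximating polynomial.} Fix $m,l,s,q$. Since approximating the top derivative of a given smooth function uniformly on a compact interval by a polynomial and integrating finitely many times (matching the jet at $0$) produces a polynomial close to it in the Fr\'echet topology, we may assume a basic neighbourhood of the form $V=\{f:\sup_{[-m_0,m_0]}|f^{(j)}-P^{(j)}|<\delta_0,\ j\le j_0\}$, where $m_0\ge m$, $\delta_0>0$, and $P$ is a polynomial with $P(0)=0$ and degree $d$; we must exhibit $f\in V\cap E(m,l,s,q)$. Let $\phi=h_l-P$, continuous on $[-m_0,m_0]$ with $\phi(0)=0$, extended continuously to $\mathbb{R}$. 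Applying Lemma~\ref{Bernstein_lemma2} to $\phi$ on $[-m_0,m_0]$ and retaining one term far enough along the resulting sequence, we get a polynomial $R(x)=\sum_{k=v}^{M}c_kx^k$ with valuation $v$ as large as we wish, in particular $v>\max(d,j_0)$ (whence $M\ge v>d$), and with $\sup_{[-m_0,m_0]}|R-\phi|<1/s$. Since $a_N,b_N\to+\infty$, choose $N\ge q$ with $\min(a_N,b_N)\ge M$.

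\emph{Grafting and the key estimate.} Fix $\theta\in C_0^\infty(\mathbb{R})$ with $\theta\equiv1$ on $[-\tfrac12,\tfrac12]$ and $\theta\equiv0$ outside $[-1,1]$, and for $\rho\in(0,1]$ set $f=P+\psi_\rho$ with $\psi_\rho(x)=\theta(x/\rho)R(x)$. Since $\psi_\rho$ coincides with $R$ near $0$, the Taylor coefficients of $f$ at $0$ agree with those of $P$ in degrees $<v$, equal $c_k$ in degrees $v\le k\le M$, and vanish in degrees $>M$; as $\min(a_N,b_N)\ge M$, this forces $S_{a_N}(f)=S_{b_N}(f)=P+R$, so $\sup_{K_m}|S_{a_N}(f)-h_l|=\sup_{K_m}|S_{b_N}(f)-h_l|=\sup_{K_m}|R-\phi|<1/s$. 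It remains to make $f$ close to $P$, i.e. $\psi_\rho$ small in $C^{j_0}([-m_0,m_0])$ --- the one delicate point, and precisely where $v>j_0$ is used. Indeed $\psi_\rho$ is supported in $[-\rho,\rho]$ and $R$ vanishes to order $v$ at $0$, so Leibniz' rule gives, for $j\le j_0$ and $|x|\le\rho$,
$$\bigl|\psi_\rho^{(j)}(x)\bigr|\le\sum_{i=0}^{j}\binom{j}{i}\rho^{-i}\|\theta^{(i)}\|_\infty\bigl|R^{(j-i)}(x)\bigr|\le C(\theta,R)\,\rho^{\,v-j}\le C(\theta,R)\,\rho,$$
so $\sup_{[-m_0,m_0]}|\psi_\rho^{(j)}|\to0$ as $\rho\to0$ for each $j\le j_0$. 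Taking $\rho$ small enough that all these suprema are below $\delta_0$, the function $f$ lies in $V\cap E(m,l,s,q)$. This establishes the density of each $E(m,l,s,q)$, hence the proposition. The genuine obstacle is thus the one Lemma~\ref{Bernstein_lemma2} resolves: approximating $h_l-P$ on $K_m$ by a polynomial whose valuation exceeds $j_0$ (so the perturbation barely moves the low-order jet) while its degree stays below $\min(a_N,b_N)$ (so both prescribed partial sums coincide with $P+R$); after that, only the displayed derivative bound needs any care.
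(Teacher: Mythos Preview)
Your argument is correct, and while the Baire scheme is the same, your density step proceeds by a genuinely different route in two respects. For the approximating polynomial, the paper invokes an external result (\cite[Lemma~2.3]{CMM}) that produces a polynomial $p$ with $p+g$ close to $f_j$ on $[-m,m]$ and with coefficient sequence close to $0$ in $\mathbb{R}^{\mathbb{N}}$; you instead use Lemma~\ref{Bernstein_lemma2} of the present paper --- applied with some auxiliary pair such as $l_n=n$, $m_n=n^2$, which you might name explicitly --- to obtain a polynomial $R$ of arbitrarily high valuation approximating $h_l-P$. For the lift to $C_0^\infty(\mathbb{R})$, the paper appeals to the openness of the Borel map $T_0:C_0^\infty(\mathbb{R})\to\mathbb{R}^{\mathbb{N}}$ (a consequence of Borel's theorem together with the open mapping theorem for Fr\'echet spaces) to pull $p$ back to a small $w$; you build the lift explicitly via the cutoff $f=P+\theta(x/\rho)R(x)$, and your Leibniz estimate --- which hinges precisely on $v>j_0$ --- is correct. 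Your route is more self-contained (no reference outside the paper, no open-mapping argument) and fully explicit; the paper's is shorter but leans on heavier background. The extra parameter $q$ in your $G_\delta$ description is harmless and merely makes the extraction of a strictly increasing $(\mu_n)$ cleaner.
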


\begin{proof} It suffices to combine the ideas of the proof of \cite[Proposition 3.2]{CT} with the 
arguments of \cite{MouNes}. Let $(f_j)$ be an enumeration of all the polynomials with coefficients in $\mathbb{Q}$ and 
$f_j(0)=0.$ Let us define the set 
$$\begin{array}{rr}E(m,j,s,n)=&\{f\in C_0^{\infty}(\mathbb{R}): \sup_{x\in [-m,m]}\left\vert
\sum_{k=1}^{a_n}\frac{f^{(k)}(0)}{k!}x^k-f_j(x)\right\vert <\frac{1}{s}\\&\hbox{ and }
\sup_{x\in [-m,m]}\left\vert\sum_{k=1}^{b_n}\frac{f^{(k)}(0)}{k!}x^k-f_j(x)\right\vert <\frac{1}{s}\}
\end{array}$$
for every $m,j,s,n\in\mathbb{N}^*.$ Observe that $E(m,j,s,n)$ is an open set and the following description holds
$$\mathcal{U}(C_0^{\infty}(\mathbb{R}),(a_n),(b_n))=\bigcap_{m,j,s\in\mathbb{N}^*}\bigcup_{n\in\mathbb{N}}
E(m,j,s,n).$$
By Baire's category theorem it suffices to show that $\cup_{n\in\mathbb{N}}E(m,j,s,n)$ is dense in 
$C_0^{\infty}(\mathbb{R}).$ To do this, let $m,j,s\in\mathbb{N}^*,$ $\varepsilon>0$ and $g$ be a polynomial. 
We seek $f\in C_0^{\infty}(\mathbb{R})$ and $n\in\mathbb{N}$ such that 
$f\in E(m,j,s,n).$ Applying the proof of \cite[Lemma 2.3]{CMM}, for any $\eta>0,$ we find $c_1,c_2,\dots,c_l$ in 
$\mathbb{R}$ such that 
$$d_{\mathbb{R}^{\mathbb{N}}}((c_1,\dots,c_l,0,\dots),0)<\eta\hbox{ and }\sup_{x\in[-m,m]}\vert p(x)+g(x)-f_j(x)\vert <\eta,$$
with $p(x)=\sum_{k=1}^lc_kx^k.$ 
Since the sequences $(a_n)$ and $(b_n)$ are strictly increasing, we fix $n\in \mathbb{N}$ such that 
$\min\{a_n,b_n\}> \max\{l,\deg (g)\}.$ Moreover the linear Borel map $T_0:C_0^{\infty}(\mathbb{R})
\rightarrow \mathbb{R}^{\mathbb{N}}, f\mapsto (f^{(k)}(0)/k!)$ is open. Hence with a previous good choice 
of $\eta<\varepsilon$ we find a function $w\in C_0^{\infty}(\mathbb{R})$ such that $T_0w=p$ 
and $d_{C_0^{\infty}(\mathbb{R})}(w,0)=d_{C_0^{\infty}(\mathbb{R})}(w+g,g)<\varepsilon.$ 
So the function $f=w+g$ does the job. 
\end{proof}

\begin{proposition} \label{prop_Fek_trunc} Let $(\lambda_n)$ be a strictly increasing sequence of positive integers. Assume that 
$\limsup_n \frac{\lambda_n}{n}=+\infty.$ Then the set $\mathcal{U}(C_0^{\infty}(\mathbb{R}),(\lambda_n))$ is 
$G_\delta$ and dense in $C_0^{\infty}(\mathbb{R})$ and contains a dense vector subspace apart from $0.$
\end{proposition}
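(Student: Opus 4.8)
The plan is to combine the two ingredients already developed in this section: the $G_\delta$-density result for the ``auxiliary'' class $\mathcal{U}(C_0^{\infty}(\mathbb{R}),(a_n),(b_n))$ from Proposition \ref{prop_interm}, and the Bernstein-type approximation Lemma \ref{Bernstein_lemma2} which provides polynomials with prescribed valuation and degree. The strategy mirrors the proof of \cite[Proposition 4.1]{CT} and of Theorem \ref{sub_vec} above. Since $\limsup_n \frac{\lambda_n}{n}=+\infty$, we may extract a strictly increasing subsequence $(\mu_n^0)$ of positive integers with $\frac{\lambda_{\mu_n^0}}{\mu_n^0}\to+\infty$; setting $a_n=\mu_n^0$ and $b_n=\lambda_{\mu_n^0}$ we have two strictly increasing sequences with $b_n/a_n\to+\infty$ (after discarding finitely many terms we may also assume $a_n\le b_n$). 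By Proposition \ref{prop_interm}, the class $\mathcal{U}(C_0^{\infty}(\mathbb{R}),(a_n),(b_n))$ is $G_\delta$ and dense in $C_0^{\infty}(\mathbb{R})$.

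The first substantial step is to show that membership in $\mathcal{U}(C_0^{\infty}(\mathbb{R}),(a_n),(b_n))$ already forces membership in $\mathcal{U}(C_0^{\infty}(\mathbb{R}),(\lambda_n))$; this yields the $G_\delta$-density of the latter. Fix a compact $K\subset\mathbb{R}$, choose $A>0$ with $K\subset[-A,A]$, and fix a pair $(h_1,h_2)$ of continuous functions vanishing at $0$. Given $f\in\mathcal{U}(C_0^{\infty}(\mathbb{R}),(a_n),(b_n))$, write $a_k=f^{(k)}(0)/k!$. The idea is to approximate, along a suitable subsequence, the partial sum $S_{a_{\mu_n}}$ by $h_1$ and $S_{b_{\mu_n}}$ by an appropriate target so that the ``block'' $\sum_{k=a_{\mu_n}+1}^{b_{\mu_n}}a_k x^k$ approximates $h_2-h_1$. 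To do this, one first applies Lemma \ref{Bernstein_lemma2} with the sequences $l_n=a_{\mu_n}+1$ and $m_n=b_{\mu_n}$ (which satisfy $l_n\le m_n$ and $m_n/l_n\to+\infty$) to produce polynomials $P_n(x)=\sum_{k=a_{\mu_n}+1}^{b_{\mu_n}}c_{n,k}x^k$ with $\sup_{[-A,A]}|P_n-(h_2-h_1)|\to0$. Then one uses the universality of $f$ for the pair $(h_1, h_1+P_N)$ for a suitable fixed large $N$ — more precisely, one runs a standard diagonal/nested-subsequence argument: one constructs by induction a decreasing chain of subsequences of $(\mu_n)$ along which $S_{a_{\mu_n}}(f)\to h_1$ uniformly on $[-A,A]$ while simultaneously the block sums are controlled using the $P_n$. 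Because $S_{b_{\mu_n}}(f)=S_{a_{\mu_n}}(f)+\big(\text{block}\big)$, controlling both endpoints controls the block, and conversely one chooses the target of the second component in the definition of $\mathcal{U}(C_0^{\infty},(a_n),(b_n))$ to be $h_1+P_n$ along the relevant index, so that $S_{b_{\mu_n}}(f)\to h_1 + (h_2-h_1)=h_2$. Passing $\varepsilon\to0$ through the usual $\bigcap_{m,j,s}\bigcup_n$ formulation turns this into the required single subsequence.

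The second step is the dense lineability. Here I would repeat verbatim the scheme of the proof of Theorem \ref{sub_vec}: fix a dense sequence $(h_l)$ in $C_0^{\infty}(\mathbb{R})$ and a countable family $(K_k)$ of compact subsets of $\mathbb{R}$ exhausting all compacta; build inductively functions $f_l\in C_0^{\infty}(\mathbb{R})$ with $d_{C_0^{\infty}(\mathbb{R})}(f_l,h_l)<2^{-l}$ and nested subsequences $\mu^{k,l}$ such that $f_l$ lies in the relevant intersection of auxiliary universal classes and, along $\mu^{k,l}$, one has $S_{a_{\mu_n^{k,l}}}(f_l)\to 1$ and $S_{b_{\mu_n^{k,l}}}(f_l)\to 1$ uniformly on $K_k$ together with $d_{C_0^{\infty}(\mathbb{R})}(S_{\mu_n^{k,l}}(f_l),f_l)\to 0$ (this last convergence holds automatically for partial sums of a $C^\infty$ function along any subsequence tending to infinity, so it imposes nothing extra — but it keeps the bookkeeping parallel to Theorem \ref{sub_vec}). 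For a nonzero linear combination $f=\alpha_1 f_1+\dots+\alpha_m f_m$ with $\alpha_m\ne0$, one uses the universality of $f_m$ to steer $S_{a_{\gamma_n}}(\alpha_m f_m)\to h_1-\sum_{i<m}\alpha_i$ and the block to $h_2-h_1$, while the earlier $f_l$ contribute $\alpha_l$ to each partial sum along $\gamma_n\subset\mu^{k,m-1}$; summing gives $S_{a_{\gamma_n}}(f)\to h_1$ and $S_{b_{\gamma_n}}(f)\to h_2$ on $K_k$, hence $f\in\mathcal{U}(C_0^{\infty}(\mathbb{R}),(\lambda_n))$.

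The main obstacle, and the one place where the argument genuinely differs from Section \ref{S_d_CT}, is the simultaneous control of \emph{both} partial sums by prescribed targets on a \emph{real} interval while respecting the valuation constraint — that is, ensuring the interpolating polynomial used to correct the second component has its lowest-degree term of index strictly above $a_{\mu_n}$ and its top degree at most $b_{\mu_n}$. This is precisely what Lemma \ref{Bernstein_lemma2} is designed to supply (it is the real-variable substitute for the potential-theoretic approximation lemma \cite[Theorem 2.1]{CT}), so the proof amounts to feeding the output of Lemma \ref{Bernstein_lemma2} into the nested-subsequence machinery of Proposition \ref{prop_interm}; the remaining estimates are routine triangle inequalities. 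I would therefore organize the write-up as: (i) extract $(a_n),(b_n)$; (ii) prove the inclusion $\mathcal{U}(C_0^{\infty}(\mathbb{R}),(a_n),(b_n))\subset\mathcal{U}(C_0^{\infty}(\mathbb{R}),(\lambda_n))$ using Lemma \ref{Bernstein_lemma2}, giving $G_\delta$-density via Proposition \ref{prop_interm}; (iii) run the lineability construction as in Theorem \ref{sub_vec}.
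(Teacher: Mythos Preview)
Your proposal has a genuine gap in step (ii): the claimed inclusion
\[
\mathcal{U}(C_0^{\infty}(\mathbb{R}),(a_n),(b_n))\subset\mathcal{U}(C_0^{\infty}(\mathbb{R}),(\lambda_n))
\]
cannot be established as you describe, and is in fact false in general. Recall that the auxiliary class $\mathcal{U}(C_0^{\infty}(\mathbb{R}),(a_n),(b_n))$ only guarantees that \emph{both} partial sums $S_{a_{\mu_n}}(f)$ and $S_{b_{\mu_n}}(f)$ can be made to approximate the \emph{same} target $h$. Along any such subsequence the block $\sum_{k=a_{\mu_n}+1}^{b_{\mu_n}}a_k x^k$ therefore tends to $0$, and there is no mechanism to force it to approximate an arbitrary nonzero $h_2-h_1$. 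When you write ``one uses the universality of $f$ for the pair $(h_1, h_1+P_N)$'' you are implicitly assuming that the auxiliary class allows two distinct targets, which it does not; and the polynomials $P_n$ produced by Lemma~\ref{Bernstein_lemma2} are external objects, not the Taylor blocks of the given fixed $f$. (A concrete obstruction: any $f$ whose Taylor coefficients vanish on every interval $(a_n,b_n]$ can still belong to the auxiliary class, yet satisfies $S_{a_n}(f)=S_{b_n}(f)$ identically, so it is never doubly universal for $h_1\ne h_2$.)

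The paper does \emph{not} prove an inclusion of classes. It proves density of $\bigcup_n E(m,j_1,j_2,s,n)$ directly by \emph{constructing} a nearby element $f=w+u_{\nu}+f_{j_2}$: Proposition~\ref{prop_interm} is invoked only to produce a single function $w$ close to $g-f_{j_2}$ whose partial sums at levels $\mu_{\nu}$ and $\lambda_{\mu_{\nu}}$ are both small (target $h=0$), while Lemma~\ref{Bernstein_lemma2} supplies a polynomial $P_{\nu}$ of valuation $>\mu_{\nu}$ and degree $\le\lambda_{\mu_{\nu}}$ approximating $f_{j_1}-f_{j_2}$; the openness of the Borel map then realizes $P_{\nu}$ as the jet of some small $u_{\nu}\in C_0^{\infty}(\mathbb{R})$. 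The correction $u_{\nu}$ is invisible to $S_{\mu_{\nu}}$ but contributes $P_{\nu}$ to $S_{\lambda_{\mu_{\nu}}}$, which is exactly the missing ingredient in your approach. Two smaller points: the constant function $1$ does not vanish at $0$, so it cannot serve as a target in the lineability step (use e.g.\ $x$ instead); and your claim that $d_{C_0^{\infty}(\mathbb{R})}(S_{\mu_n}(f_l),f_l)\to0$ ``holds automatically'' is false, since the Taylor series of a $C^{\infty}$ function need not converge to the function.
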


\begin{proof} Let $(f_j)$ be an enumeration of all the polynomials with coefficients in 
$\mathbb{Q}$ vanishing at zero. Let us consider the sets
$$\begin{array}{ll}E(m,j_1,j_2,s,n)=\{f\in C_0^{\infty}(\mathbb{R}):&\sup_{x\in[-m,m]}\vert \sum_{k=0}^{\lambda_n}\frac{f^{(k)}(0)}{k!}x^k-f_{j_1}(x)\vert<
\frac{1}{s}\\&\hbox{ and }\sup_{x\in[-m,m]}\vert \sum_{k=0}^{n}\frac{f^{(k)}(0)}{k!}x^k-f_{j_2}(x)\vert<
\frac{1}{s}\}\end{array}$$
for every $m,$ $j_1,$ $j_2,$ $s,$ $n\in\mathbb{N}.$ Weirstrass approximation theorem ensures that 
$$\mathcal{U}(C_0^{\infty}(\mathbb{R}),(\lambda_n))=\bigcap_{m,j_1,j_2,s\in\mathbb{N}}\bigcup_{n\in\mathbb{N}}
E(m,j_1,j_2,s,n).$$ 
Since the sets $E(m,j_1,j_2,s,n)$ are open, according to Baire's category theorem it suffices to prove that 
$\cup_{n\in\mathbb{N}} E(m,j_1,j_2,s,n)$ is dense in $C_0^{\infty}(\mathbb{R})$ for every $m,$ $j_1,$ $j_2,$ $s\in\mathbb{N}$ to obtain that the set $\mathcal{U}(C_0^{\infty}(\mathbb{R}),(\lambda_n))$ is 
$G_\delta$ and dense in $C_0^{\infty}(\mathbb{R}).$ 
To do this, we fix $m,$ $j_1,$ $j_2,$ $s\in\mathbb{N},$ $\varepsilon>0$ and $g\in C_0^{\infty}(\mathbb{R}).$ Then it suffices to find 
$n\geq 0$ and $f\in E(m,j_1,j_2,s,n)$ such that
\begin{equation}\label{inequ_eps}
d_{C_0^{\infty}(\mathbb{R})}(f,g)<\varepsilon,
\end{equation}
where $d_{C_0^{\infty}(\mathbb{R})}$ denotes the Fr\'echet distance in $C_0^{\infty}(\mathbb{R}).$ 
By Weierstrass approximation theorem we can assume that $g$ is a polynomial with $g(0)=0.$ Since 
$\limsup_{n}\frac{\lambda_n}{n}=+\infty,$ there exists a strictly increasing sequence $(\mu_n)\subset \mathbb{N}$ such that 
$ \frac{\lambda_{\mu_n}}{\mu_n}\rightarrow +\infty$ as $n\rightarrow +\infty.$ 
We apply Lemma \ref{Bernstein_lemma2} for $h:= f_{j_1}-f_{j_2},$ $l_n=1+\mu_n$ and $m_n=\lambda_{\mu_n}.$ We obtain 
a sequence of polynomial $(P_n)$ of the form $P_n(x)=\sum_{k=1+\mu_n}^{\lambda_{\mu_n}}c_{n,k}x^k$ which converges 
to $f_{j_1}-f_{j_2}$ uniformly on $[-m,m].$ There exists $N_1\in\mathbb{N}$ such that for every $n\geq N_1$ the following inequality holds
$$\sup_{x\in [-m,m]}\vert P_n(x)-(f_{j_1}(x)-f_{j_2}(x))\vert <1/2s.$$
Observe that the linear Borel map $T_0:C_0^{\infty}(\mathbb{R})
\rightarrow \mathbb{R}^{\mathbb{N}}, f\mapsto (f^{(k)}(0)/k!)$ is open. Hence the image of every $\varepsilon/2$-neighborhood 
of $0$ in $C_0^{\infty}(\mathbb{R})$ contains some $\eta$-neighborhood 
of $0$ in $\mathbb{R}^{\mathbb{N}}.$ Moreover, by construction we have
\begin{equation}\label{val_sequ}\hbox{val}(P_n)>\mu_n\hbox{ and }\mu_n\rightarrow +\infty,\hbox{ as }n\rightarrow +\infty,
\end{equation}
where $\hbox{val}(P_n)$ denotes the valuation of the polynomial $P_n.$ So the property 
(\ref{val_sequ}) implies that the inequality $d_{\mathbb{R}^{\mathbb{N}}}(P_n,0)<\eta$ holds 
for $n$ large enough. Therefore 
one can find a positive integer $N_2>N_1$ such that for $n\geq N_2$ there exists 
$u_n\in C_0^{\infty}(\mathbb{R})$ with $T_0u_n=P_n$ and $d_{C_0^{\infty}(\mathbb{R})}(u_n,0)<\varepsilon/2.$ 
On the other hand, applying Proposition \ref{prop_interm} for $a_n=\mu_n,$ $b_n=\lambda_{\mu_n},$ 
we find a function $w\in C_0^{\infty}(\mathbb{R})$ and a sufficiently large positive integer $\nu$ with 
$\mu_{\nu}>\deg (f_{j_2})$ such that 
$$d_{C_0^{\infty}(\mathbb{R})}(w,g-f_{j_2})<\varepsilon/2,\ 
\sup_{x\in [-m,m]}\vert\sum_{k=1}^{\mu_{\nu}}\frac{w^{(k)}(0)}{k!}x^k\vert<1/2s
\hbox{ and }\sup_{x\in [-m,m]}\vert\sum_{k=1}^{\lambda_{\mu_{\nu}}}\frac{w^{(k)}(0)}{k!}x^k\vert<1/2s.$$ 
Thus the function $f:=w+u_{\nu}+f_{j_2}$ belongs to $E(m,j_1,j_2,s,\mu_{\nu})$ and satisfies inequality (\ref{inequ_eps}). Indeed we have 
$$\sup_{x\in [-m,m]}\vert\sum_{k=1}^{\mu_{\nu}}\frac{f^{(k)}(0)}{k}x^k-f_{j_2}(x)\vert=
\sup_{x\in [-m,m]}\vert\sum_{k=1}^{\mu_{\nu}}\frac{w^{(k)}(0)}{k}x^k\vert <1/2s,$$
$$\begin{array}{rcl}\displaystyle\sup_{x\in [-m,m]}\vert\sum_{k=1}^{\lambda_{\mu_{\nu}}}\frac{f^{(k)}(0)}{k}x^k-f_{j_1}(x)\vert &\leq &
\displaystyle\sup_{x\in [-m,m]}\vert\sum_{k=1}^{\mu_{\nu}}\frac{w^{(k)}(0)}{k}x^k\vert+
\sup_{x\in [-m,m]}\vert P_{\nu}(x)-(f_{j_1}(x)-f_{j_2}(x))\vert\\& <&{1}/{2s}+{1}/{2s}={1}/{s},\end{array}$$
and
$$\begin{array}{rcl}d_{C_0^{\infty}(\mathbb{R})}(f,g)=d_{C_0^{\infty}(\mathbb{R})}(w+u_{\nu}+f_{j_2},g)&\leq &
d_{C_0^{\infty}(\mathbb{R})}(w,g-f_{j_2})+d_{C_0^{\infty}(\mathbb{R})}(u_{\nu},0)\\&<&{\varepsilon}/{2}
+{\varepsilon}/{2}=\varepsilon.\end{array}$$
Hence the set $\mathcal{U}(C_0^{\infty}(\mathbb{R}),(\lambda_n))$ is 
$G_\delta$ and dense in $C_0^{\infty}(\mathbb{R}).$ Finally to prove that 
the set $\mathcal{U}(C_0^{\infty}(\mathbb{R}),(\lambda_n))$ contains a dense vector subspace, except $0,$ it 
suffices to write the analogue of Lemma \ref{lemma_double} (which will be a corollary of Proposition \ref{direct_A0}) and to follow the proof of Theorem \ref{sub_vec}.  
\end{proof}

Propositions \ref{direct_A0} and \ref{prop_Fek_trunc} can be summarized as follows.

\begin{theorem}\label{general_thm} Let $(\lambda_n)$ be a strictly increasing sequence of positive integers. The following assertions are equivalent
\begin{enumerate}
\item $\mathcal{U}(C_0^{\infty}(\mathbb{R}),(\lambda_n))$ is non-empty,
\item $\limsup_n \frac{\lambda_n}{n}=+\infty.$ 
\end{enumerate}
In addition, in the case $\limsup_n \frac{\lambda_n}{n}=+\infty,$ the set $\mathcal{U}(C_0^{\infty}(\mathbb{R}),(\lambda_n))$ is 
a $G_\delta$ and dense subset of $C_0^{\infty}(\mathbb{R})$ and contains a dense vector subspace apart from $0.$
\end{theorem}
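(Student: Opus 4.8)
The plan is to show that Theorem~\ref{general_thm} is an immediate consequence of the two propositions already proved, so the ``proof'' is essentially a bookkeeping argument combined with the observation that the direct and converse implications have been established separately. First I would prove the equivalence $(1)\Leftrightarrow(2)$. The implication $\neg(2)\Rightarrow\neg(1)$, i.e. $\limsup_n\frac{\lambda_n}{n}<+\infty\Rightarrow\mathcal{U}(C_0^{\infty}(\mathbb{R}),(\lambda_n))=\emptyset$, is exactly Proposition~\ref{direct_A0}. The reverse implication $(2)\Rightarrow(1)$ follows from Proposition~\ref{prop_Fek_trunc}, which asserts that under $\limsup_n\frac{\lambda_n}{n}=+\infty$ the class $\mathcal{U}(C_0^{\infty}(\mathbb{R}),(\lambda_n))$ is $G_\delta$ and dense in $C_0^{\infty}(\mathbb{R})$, hence in particular non-empty. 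This closes the equivalence.

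Next I would address the ``In addition'' part. Assuming $\limsup_n\frac{\lambda_n}{n}=+\infty$, Proposition~\ref{prop_Fek_trunc} directly gives that $\mathcal{U}(C_0^{\infty}(\mathbb{R}),(\lambda_n))$ is a $G_\delta$ dense subset of $C_0^{\infty}(\mathbb{R})$ and that it contains a dense vector subspace apart from $0$. So there is genuinely nothing more to prove: one simply cites Proposition~\ref{prop_Fek_trunc} for all three assertions. In the write-up I would phrase this as: ``This is just a reformulation of Propositions~\ref{direct_A0} and~\ref{prop_Fek_trunc}.''

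I do not expect any real obstacle here, since the theorem is by design a summary. The only point requiring a word of care is making sure the logical directions line up: Proposition~\ref{direct_A0} is stated as a one-way implication (the contrapositive of $(1)\Rightarrow(2)$), while Proposition~\ref{prop_Fek_trunc} supplies $(2)\Rightarrow(1)$ together with the stronger genericity conclusions. The most substantive hidden content — the quantitative Bernstein approximation with prescribed valuation and degree (Lemmas~\ref{Bernstein_lemma} and~\ref{Bernstein_lemma2}), the intermediate Baire-category argument (Proposition~\ref{prop_interm}), and the Turán-type real/complex polynomial inequality (Theorem~\ref{thmAronBeau}) used in Proposition~\ref{direct_A0} — has already been carried out, so the proof of Theorem~\ref{general_thm} itself is a single sentence.

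\begin{proof}
The equivalence of $(1)$ and $(2)$ is the content of Propositions~\ref{direct_A0} and~\ref{prop_Fek_trunc}: if $\limsup_n\frac{\lambda_n}{n}<+\infty$ then $\mathcal{U}(C_0^{\infty}(\mathbb{R}),(\lambda_n))=\emptyset$ by Proposition~\ref{direct_A0}, while if $\limsup_n\frac{\lambda_n}{n}=+\infty$ then $\mathcal{U}(C_0^{\infty}(\mathbb{R}),(\lambda_n))$ is non-empty by Proposition~\ref{prop_Fek_trunc}. The additional statement, valid when $\limsup_n\frac{\lambda_n}{n}=+\infty$, is precisely Proposition~\ref{prop_Fek_trunc}.
\end{proof}
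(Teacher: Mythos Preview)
Your proposal is correct and matches the paper exactly: the paper does not give a separate proof of Theorem~\ref{general_thm} but simply introduces it with the sentence ``Propositions~\ref{direct_A0} and~\ref{prop_Fek_trunc} can be summarized as follows.'' Your write-up spells out precisely this, so there is nothing to add.
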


\section{Further development and remark}\label{section_remark} The notion 
of doubly universal series has connection with that of topological multiple recurrence 
in dynamical systems. We refer the reader to \cite[page 22]{CT}. Recently Costakis and Parissis proved 
that a frequently Ces\`aro hypercyclic bounded linear operator $T$ acting on an infinite dimensional 
separable Banach space over $\mathbb{C}$ is topologically multiply 
recurrent \cite{CP}. The notion of Ces\`aro hypercyclicity for an operator $T$ was introduced in \cite{FL} and that of frequent Ces\`aro hypercyclicity in \cite{CR}. Let us introduce the set of 
frequent Ces\`aro universal series. For a power series $f=\sum_{j\geq 0}a_jz^j,$ $\sigma_n(f):=
\frac{1}{n+1}\sum_{j=0}^nS_j(f)
%\sum_{j=0}^na_k \left(1-\frac{k}{n+1}\right)z^k
$ denotes the sequence of Ces\`aro means of the partial 
sums of the Taylor expansion of $f$ at $0.$ We know that the set $\mathcal{U}_{Ces}(\mathbb{D})$ of functions 
$f\in H(\mathbb{D})$ such that, for every compact set $K\subset \mathbb{C}$ 
with $K^c$ connected and $K\cap \mathbb{D}=\emptyset$ and for every function $h\in A(K),$ there exists an increasing sequence $(\lambda_n)$ of positive integers such that $\sup_{z\in K}\vert 
\sigma_{\lambda_n}(f)(z)-h(z)\vert\rightarrow 0,$ as $n\rightarrow +\infty,$ is a $G_{\delta}$-subset 
of $H(\mathbb{D})$ (see \cite{melanes1} or for instance \cite{bgnp}). 

\begin{definition}\label{def_CesFUniv}
{\rm A power series $f=\sum_{j\geq 0}a_jz^j$ of radius of convergence $1$ 
is said to be a {\it frequently Ces\`aro universal series} if for every $\varepsilon >0,$ for every compact set $K\subset \mathbb{C}\setminus \mathbb{D}$ with connected complement, and any function $h\in A(K),$ 
we have  
\[
\underline{\hbox{dens}}\left\{n\in\mathbb{N};\ \sup_{z\in K}\left\vert\sigma_n(f)(z)-h(z)\right\vert <\varepsilon\right\}>0.
\] 
}
\end{definition}
\noindent The lower and upper densities of a subset $A$ of $\mathbb{N}$ are respectively defined as follows
$$\underline{\hbox{dens}}(A)=\liminf_{N\rightarrow +\infty}\frac{\#\{n\in A:n\leq N\}}{N}\hbox{ and }
\overline{\hbox{dens}}(A)=\limsup_{N\rightarrow +\infty}\frac{\#\{n\in A:n\leq N\}}{N}$$
where as usual $\#$ denotes the cardinality of the corresponding set.

According to a recent result we know that universal Taylor series cannot be frequently universal in the sense of Definition \ref{def_CesFUniv} where we replace the 
Ces\`aro operators $\sigma_n$ by the partial sums $S_n$ \cite{MouMu1}. We state a similar result for the Ces\`aro 
universal series . 

\begin{theorem}\label{NONF_Ces} The set of frequently Ces\`aro universal series is empty.
\end{theorem}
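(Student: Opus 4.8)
The plan is to argue by contradiction, showing that if $f=\sum_{j\geq 0}a_jz^j$ were frequently Ces\`aro universal, then two incompatible approximation requirements on disjoint circles would force a lower bound on the density of a ``good'' set of indices that contradicts the Tur\'an-inequality estimates used in Proposition \ref{propCT}. The key observation is that the Ces\`aro mean $\sigma_n(f)$ is again a polynomial of degree at most $n$ in $z$, namely $\sigma_n(f)(z)=\sum_{j=0}^n b_{n,j}z^j$ where $b_{n,j}=\frac{n+1-j}{n+1}a_j$; in particular the top coefficient is $b_{n,n}=\frac{1}{n+1}a_n$, and the map $n\mapsto\sigma_n(f)$ behaves, coefficient-wise, very much like a genuine partial sum. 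So one can hope to replay the Tur\'an/Cauchy machinery on $\sigma_n$.

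\medskip

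\emph{First} I would fix a compact set $K\subset\mathbb{C}\setminus\mathbb{D}$ with connected complement that meets $\{|z|>1\}$ (say a circular arc $\Gamma_{\rho,\delta}$ with $\rho>1$ and a small $\delta$), together with the target function $h_1\equiv 0$, and a second compact piece $\Gamma_{R,\delta}$ with $R$ very large compared to $C_\delta^{\,d}$ for an appropriate exponent, mimicking the choice \eqref{hypalpha}. Being frequently Ces\`aro universal for $h\equiv 0$ on $K_{R,\delta}:=\Gamma_{R,\delta}\cup K$ gives, for every $\varepsilon>0$, a set $A_\varepsilon\subset\mathbb{N}$ of positive lower density with $\sup_{z\in K_{R,\delta}}|\sigma_n(f)(z)|<\varepsilon$ for all $n\in A_\varepsilon$. \emph{Next}, for each such $n$, applying Tur\'an's inequality (Lemma \ref{TuranIneq}) on the arc $\Gamma_{R,\delta}$ and then Cauchy estimates to the polynomial $\sigma_n(f)$ of degree $\leq n$ yields $|b_{n,j}|^{1/j}\leq \frac{\varepsilon^{1/j}}{R}\,C_\delta^{\,n/j}$ for $0\leq j\leq n$; equivalently $|a_j|^{1/j}=\left|\frac{n+1}{n+1-j}b_{n,j}\right|^{1/j}\leq \frac{(n+1)^{1/j}}{R}C_\delta^{\,n/j}\varepsilon^{1/j}$ for $j\leq n$ (the factor $\frac{n+1}{n+1-j}$ causes no trouble away from $j=n$, and at the top end one pays at most a harmless $(n+1)^{1/j}$). \emph{Then} I would use this to squeeze: on the one hand, these bounds force the lower partial sums $S_m(f)$ to be uniformly small on $K$ for $m$ a suitable fraction of $n$; on the other hand, since $\sigma_n(f)=\frac{1}{n+1}\sum_{k=0}^n S_k(f)$ is small on $K$ while most of the $S_k(f)$ entering the average are themselves controlled, one can extract an honest index $m\approx n$ with $S_m(f)$ small on $K$ too, and then invoke the classical fact (already implicit in the Nestoridis-type theory, and exactly the mechanism of \eqref{festim}--\eqref{double-estim}) that a universal-type series cannot have a positive-density set of partial sums tending to zero on a fixed $K$ — this is precisely the $S_n$-analogue proved in \cite{MouMu1}, which the theorem is explicitly modeled on.

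\medskip

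\emph{The hard part} will be the passage from smallness of the Ces\`aro averages $\sigma_n(f)$ on a positive-density set to smallness of actual partial sums $S_m(f)$ on a positive-density set, since an average being small does not by itself make the summands small. The resolution is to combine the coefficient bounds above (which control the ``tail'' blocks $\sum_{j=m+1}^n a_jz^j$ on $K$ via the choice $R\gg C_\delta^{\,d}$, exactly as in the derivation of \eqref{festim}) with the telescoping identity $S_m(f)=(m+1)\sigma_m(f)-m\,\sigma_{m-1}(f)$, or alternatively to note $\sigma_n(f)-\sigma_{n-1}(f)=\frac{1}{n(n+1)}\sum_{j=0}^n j\,a_jz^j$; iterating one recovers partial sums from consecutive Ces\`aro means. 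One then checks that the set of $m$ for which $S_m(f)$ is small on $K$ inherits positive lower density from $A_\varepsilon$ (up to shifting indices by a bounded multiplicative factor, using $\liminf$ stability under such operations). Feeding this into the contradiction of \cite[Theorem]{MouMu1} — or reproving that contradiction directly by taking $h_1\equiv 1$ on another arc and arguing as in \eqref{festim2_1} — completes the proof. Finally, the real-variable version (power series with real coefficients, convergence on symmetric intervals) follows verbatim after replacing Tur\'an's inequality by Theorem \ref{thmAronBeau}, exactly as Proposition \ref{direct_A0} was deduced from Proposition \ref{propCT}, so the statement holds in both the complex and real settings as announced.
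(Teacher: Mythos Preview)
Your proposal has a genuine gap, precisely at the step you yourself flag as ``the hard part''. Neither of your two suggested fixes works:
\begin{itemize}
\item The telescoping identity $S_m(f)=(m+1)\sigma_m(f)-m\,\sigma_{m-1}(f)$ amplifies the error by a factor of $m$: even if both $\sigma_m$ and $\sigma_{m-1}$ are $O(\varepsilon)$ on $K$, you only get $\|S_m(f)\|_K=O(m\varepsilon)$, which is useless for fixed $\varepsilon$ as $m\to\infty$. Moreover, a set of positive lower density need not contain any pair of consecutive integers, so you cannot even guarantee that $m$ and $m-1$ both lie in $A_\varepsilon$.
\item The Tur\'an/Cauchy route gives $|a_j|\leq \frac{n+1}{n+1-j}\cdot C_\delta^{\,n}\varepsilon\,R^{-j}$. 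For $j$ proportional to $n$ this is usable, but to bound $S_m(f)(z)=\sum_{j=0}^m a_jz^j$ you need all coefficients $a_0,\dots,a_m$, and for small $j$ the factor $C_\delta^{\,n}$ explodes (while $R^{-j}$ stays bounded). So the assertion that ``these bounds force the lower partial sums $S_m(f)$ to be uniformly small on $K$'' is not justified; in Proposition~\ref{propCT} this worked only because one restricted to the block $\mu_n<j\leq\lambda_{\mu_n}$ where the exponent $\lambda_{\mu_n}/j$ was uniformly bounded by the hypothesis $\lambda_n\leq dn$. No such structure is present here.
\end{itemize}

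The paper's proof avoids this obstacle entirely by going in the \emph{opposite} (Tauberian) direction, which is easy, rather than the Abelian direction you attempt. It quotes three external facts: (i) any $f\in\mathcal{U}_{Ces}(\mathbb{D})$ is already an ordinary universal Taylor series \cite[Corollary~4.4]{bayces}; (ii) for such $f$ and any nonzero polynomial $h$, there is a sequence $(\lambda_n)$ of \emph{upper} density $1$ with $S_{\lambda_n}(f)\to h$ uniformly on $K$ \cite[Theorem~3.3]{MouMu1}; (iii) along a sequence of upper density $1$, convergence of partial sums transfers to convergence of Ces\`aro means, so $\sigma_{\lambda_n}(f)\to h$ on $K$ as well \cite[Section~4]{CharMou}. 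Setting $d=\|h\|_K>0$, the set $A=\{n:\|\sigma_n(f)\|_K<d/2\}$ then eventually avoids $(\lambda_n)$, hence $\underline{\hbox{dens}}(A)\leq 1-\overline{\hbox{dens}}(\lambda_n)=0$, contradicting frequent Ces\`aro universality for the target $0$. No Tur\'an inequality appears anywhere in this argument.
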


\begin{proof} Let $f$ be in $\mathcal{U}_{Ces}(\mathbb{D}).$ According to \cite[Corollary 4.4]{bayces} $f$ 
is a universal Taylor series. Let  $K\subset\mathbb{C}\setminus\mathbb{D}$ be a compact set 
with connected complement and $h$ be a non-zero polynomial. Then Theorem 3.3 of \cite{MouMu1} ensures that there exists a subsequence 
$(\lambda_n)$ of positive integers with $\overline{\hbox{dens}}(\lambda_n)=1$ such that 
$\sup_{z\in K}\vert S_{\lambda_n}(f)(z)-h(z)\vert\rightarrow 0,$ as $n\rightarrow +\infty.$ According to Section 4 of \cite{CharMou} we have $\sup_{z\in K}\vert \sigma_{\lambda_n}(f)(z)-h(z)\vert\rightarrow 0.$ We end 
as in the proof of \cite[Theorem 3.3]{MouMu1}. Indeed define the subset $A$ of $\mathbb{N}$ by 
$$A=\{n\in\mathbb{N};\ \sup_{z\in K}\vert \sigma_{n}(f)(z)\vert <d/2\},$$
where $d=\sup_{z\in K}\vert h(z)\vert.$ Thus there exists an integer $N$ large enough, such that, 
for every $n\geq N,$ $\lambda_n\notin A.$ Let us consider the sequence $\tilde{\lambda}=(\lambda_N,\lambda_{N+1},\dots).$ 
Clearly $\overline{\hbox{dens}}(\tilde{\lambda})=1.$ So the inclusion 
$A\subset\mathbb{N}\setminus\tilde{\lambda}$ implies 
$$ \underline{\hbox{dens}}(A)\leq \underline{\hbox{dens}}(\mathbb{N}\setminus\tilde{\lambda}).$$
But we have $\underline{\hbox{dens}}(\mathbb{N}\setminus\tilde{\lambda})=
1-\overline{\hbox{dens}}(\tilde{\lambda})=0.$ Thus $f$ cannot be a frequently Ces\`aro universal series.
\end{proof}

Therefore the sequence of operators given by Ces\`aro means of sequence of operators given by 
the partial sums $(S_n)$ of the Taylor development at $0$ of functions of $H(\mathbb{D})$ 
is not frequently universal even if the sequence of operators $(S_n)$ is doubly universal. Since the 
notion of doubly universality has connection with that of topological recurrence, 
we can compare this result with the main result of \cite{CP}. 

\begin{remark}{\rm \begin{enumerate}
\item Theorem \ref{NONF_Ces} remains true in the case of Fekete universal functions. To see this, it suffices to argue as in the proof of Theorem \ref{NONF_Ces} taking into account 
the results of \cite{MouMu2}.
\item The proof of Theorem \ref{NONF_Ces} shows that all the elements $f=\sum_{k\geq 0}a_kz ^k$ 
of $\mathcal{U}_{Ces}(\mathbb{D})$ are 1-upper frequently Ces\`aro universal, i.e. for every compact set 
$K\subset \mathbb{C}\setminus \mathbb{D}$ with connected complement, and any function $h\in A(K),$ 
there exists an increasing sequence $\lambda=(\lambda_n)$ of positive integers with 
$\overline{\hbox{dens}}(\lambda)=1$ such that $\sup_{z\in K}\left\vert\sigma_{\lambda_n}(f)(z)-h(z)\right\vert\rightarrow 0$ 
as $n\rightarrow +\infty.$
\end{enumerate} }
\end{remark}

Let $(\lambda_n)$ be a strictly increasing sequence of positive integers with $(\lambda_n)\ne \mathbb{N}.$ We end the paper with the following remark, which shows that one can find examples of doubly universal series with respect to the given sequence 
$(\lambda_n)$ without additional hypothesis. For instance, let us define the set 
$\mathcal{U}(\mathbb{R}^{\mathbb{N}},(\lambda_n))$ of 
sequences $(a_n)\subset\mathbb{R}^{\mathbb{N}}$ satisfying the following universal property: 
for every pair of real numbers $(l_1,l_2)$ 
there exists a subsequence $(\mu_n)$ of positive integers such that $\vert \sum_{k=0}^{\lambda_{\mu_n}}a_k-l_1\vert \rightarrow 0$ and $\vert \sum_{k=0}^{\mu_n}a_k-l_2\vert \rightarrow 0,$ as $n\rightarrow +\infty.$ Then $\mathcal{U}(\mathbb{R}^{\mathbb{N}},(\lambda_n))$ is a $G_\delta$ and dense subset of 
$\mathbb{R}^{\mathbb{N}},$ endowed with its natural topology defined in Section \ref{S_d_infinitely}. In particular we have $\mathcal{U}(\mathbb{R}^{\mathbb{N}},(\lambda_n))\ne\emptyset.$ Indeed, let us consider 
$$E(j_1,j_2,s,n)=\left\{(a_n)\in\mathbb{R}^{\mathbb{N}}:\vert \sum_{k=0}^{\lambda_n}a_k-
r_{j_1}\vert<\frac{1}{s}\hbox{ and }\vert \sum_{k=0}^{n}a_k-r_{j_2}\vert<\frac{1}{s}\right\}$$
for every $j_1,$ $j_2,$ $s,$ $n\in\mathbb{N},$ where $(r_j)$ is an enumeration of $\mathbb{Q}.$ Obviously we have the following description
$$\mathcal{U}(\mathbb{R}^{\mathbb{N}},(\lambda_n))=\bigcap_{j_1,j_2,s\in\mathbb{N}}\bigcup_{n\in\mathbb{N}}
E(j_1,j_2,s,n).$$ 
Since the sets $E(j_1,j_2,s,n)$ are open, according to Baire's category theorem it suffices to prove that 
$\cup_{n\in\mathbb{N}} E(j_1,j_2,s,n)$ is dense in $\mathbb{R}^{\mathbb{N}}$ for every $j_1,$ $j_2,$ $s\in\mathbb{N}.$ Fix $j_1,$ $j_2,$ $s\in\mathbb{N},$ $\varepsilon>0$ and $(b_n)\in \mathbb{R}^{\mathbb{N}}.$ We seek $n\geq 0$ and $(a_k)\in E(j_1,j_2,s,n)$ such that $d_{\mathbb{R}^{\mathbb{N}}}((a_n),(b_n))<\varepsilon.$ 
Let us choose $n\in\mathbb{N}$ so that $\sum_{k\geq n}2^{-k}<\varepsilon$ and $\lambda_n>n.$ It is easy to check that the sequence $(a_k)$ defined by $a_k=b_k,$ for $k=0,\dots,n-1,$ $a_{n}=r_{j_2}-\sum_{k=0}^nb_k,$ $a_k=0$ for $k=n+1,\dots,\lambda_{n}-1$ and $a_{\lambda_n}=r_{j_1}-r_{j_2}$ does the job.


\begin{thebibliography}{99}

\bibitem{AronBeau} \textsc{R. Aron, B. Beauzamy, P. Enflo},
{\it Polynomials in many variables: Real vs Complex norms}, 
J. Approx. Theory \textbf{74} (1993) 181--198.

\bibitem{bayces} \textsc{F. Bayart}, {\it Boundary behavior and Ces\`aro means of universal Taylor series,} 
Rev. Mat. Complut. \textbf{19} (2006), no. 1, 235--247.

\bibitem{bgnp} \textsc{F. Bayart, K.-G. Grosse-Erdmann, V. Nestoridis, C. Papadimitropoulos},
{\it Abstract theory of universal series and applications}, Proc.
London Math. Soc. \textbf{96} (2008) 417-463.

\bibitem{bernal}\textsc{L. Bernal-Gonz\'alez, D. Pellegrino, J.B. Seoane-Sep\'ulveda}, \textit{Linear subsets of nonlinear sets in topological 
vector spaces}, Bull. Amer. Math. Soc. \textbf{51} (2014) 71--130.

\bibitem{Bernstein} \textsc{S. Bernstein},
{\it D\'emonstration du th\'eor\`eme de Weierstrass fond\'ee sur le calcul des probabilit\'es}, 
Commun. Soc. Math. Kharkow \textbf{2} (1912-13) 1--2.

\bibitem{CharMou}\textsc{S. Charpentier, A. Mouze}, \textit{Universal Taylor series and summability}, Rev. 
Mat. Complut. \textbf{28} (2015) 153--167.

\bibitem{CMM}\textsc{S. Charpentier, Q. Menet, A. Mouze}, \textit{Closed universal subspaces of spaces of infinitely differentiable functions}, Ann. Inst. Fourier (Grenoble) \textbf{64} (2014) no1, 297--325.

\bibitem{CostaMaria}\textsc{G. Costakis, M. Marias, V. Nestoridis}, \textit{Universal Taylor series on open subsets of $\mathbb{R}^n$}, Analysis
\textbf{26} (2006) 401--409.

\bibitem{CP}\textsc{G. Costakis, I. Parissis}, \textit{Szemer\'edi's theorem, frequent hypercyclicity and 
multiple recurrence}, Math. Scand. 
\textbf{110} (2012), no. 2,  251--272.

\bibitem{CR}\textsc{G. Costakis, I. Ruzsa}, \textit{Frequently Ces\`aro hypercyclic operators 
are hypercyclic}, preprint.

\bibitem{CT}\textsc{G. Costakis, N. Tsirivas}, \textit{Doubly universal Taylor series}, J. Approx. Theory 
\textbf{180} (2014) 21--31.

\bibitem{ge} \textsc{K-G. Grosse Erdmann}, {\it Universal families and
hypercyclic operators}, Bull. Amer. Math. Soc. (N.S.) \textbf{36}
(1999) no. 3 345--381.

%\bibitem{Jackson}\textsc{D. Jackson}, \textit{The theory of approximation}, Amer. Math. Soc. Coll. Publ., vol. 11 
%New-York (1930).

\bibitem{Klimek} \textsc{M. Klimek},
{\it Pluripotential theory}, 
London Mathematical Society Monographs. New Series, 6. Oxford Science Publications. The Clarendon Press, Oxford University Press, New York, (1991).

\bibitem{FL}\textsc{F. Le\'on-Saavedra}, \textit{Operators with hypercyclic Ces\`aro means}, Studia Math. 
\textbf{152} (2002) 201--215.

\bibitem{melanes1}\textsc{A. Melas, V. Nestoridis}, \textit{Universality of Taylor Series as a generic property of holomorphic functions}, Adv. in Math., \textbf{157} (2001) 138--176.

\bibitem{MouMu1}\textsc{A. Mouze, V. Munnier}, \textit{On the frequent universality of universal Taylor series in the complex plane}, Glasg. Math. J. \textbf{59} (2017) no. 1, 109--117.

\bibitem{MouMu2}\textsc{A. Mouze, V. Munnier}, \textit{Polynomial inequalities and universal Taylor series}, 
Math. Z. \textbf{284} (2016) nà. 3-4, 919--946.

\bibitem{MouNes}\textsc{A. Mouze, V. Nestoridis}, \textit{Universality and ultradifferentiable 
functions: Fekete's Theorem}, Proc. Amer. Math. Soc. 
\textbf{138} (2010) 3945--3955.

\bibitem{Nes} \textsc{V. Nestoridis}, {\it Universal Taylor series},
Ann. Inst. Fourier (Grenoble) \textbf{46} (1996) no. 5 1293--1306.

\bibitem{Pal} \textsc{G. P\'al}, {\it Zwei kleine Bemerkungen}, Tokohu Math. J. \textbf{6}
(1914/15) 42--43.

%\bibitem{Ransford} \textsc{T. Ransford} {\it Potential theory in the complex plane}, London Mathematical 
%Society Student Texts, vol. 28, Cambridge University Press, Cambridge, (1995).

\bibitem{Tur} \textsc{P. Tur\'an}, {\it Eine neue Methode in der Analysis und 
deren Anwendungen},
Akad\'emiai Kiad\'o, Budapest, 1953.


\end{thebibliography}
\end{document}